\newtheorem{theorem}{Theorem}[subsection]
\newtheorem{corollary}[theorem]{Corollary}
\newtheorem{example}[theorem]{Example}
\newtheorem{examples}[theorem]{Examples}
\newtheorem{lemma}[theorem]{Lemma}
\newtheorem{proposition}[theorem]{Proposition}
\newtheorem{conjecture}[theorem]{Conjecture}
\newtheorem{remark}[theorem]{Remark}
\newcommand{\pn}{\par\noindent}
\newcommand{\mor}[3]{$\xymatrix@1@C=15pt{#3: #1\ar[r]& #2}$}
\newcommand{\iso}[3]{$\xymatrix@1@C=15pt{#3: #1\ar[r]^-{\cong}& #2}$}
\newcommand{\Hom}{{\rm Hom\,}}
\newcommand{\Ext}[1]{{\rm Ext}#1}
\newcommand{\Tor}[1]{{\rm Tor}#1}
\newcommand{\Ker}{{\rm Ker}\,}
\newcommand{\Coker}{{\rm Coker}\,}
\renewcommand{\top}{{\rm top}\,}
\newcommand{\und}[1]{S(#1)}
\newcommand{\ov}[1]{Q(#1)}
\newcommand{\findim}{{\rm fin.dim.\, }}
\renewcommand{\P}{\mathcal{P}^{<\infty}}
\renewcommand{\mod}{{\rm mod\,}}
\newcommand{\pd}{{\rm pd\,}}
\newcommand{\id}{{\rm id\,}}
\newcommand{\End}{{\rm End\,}}
\newcommand{\C}{\mathcal{C}}
\newcommand{\X}{\mathcal{X}}
\newcommand{\Y}{\mathcal{Y}}
\newcommand{\F}{\mathcal{F}}
\newcommand{\T}{\mathcal{T}}
\newcommand{\CS}{\und{\mathcal{C}}}
\newcommand{\CQ}{\ov{\mathcal{C}}}
\begin{document}
\sloppy
\bibliographystyle{plain}

\title[Filtering subcategories of modules of an artinian algebra]{Filtering subcategories of modules of an artinian algebra}

\author{F. Huard, M. Lanzilotta and D. Smith}
\thanks{2010 Mathematics Subject Classification : 16E10, 16G10, 16P20}%
\thanks{Key words and phrases : filtrations, finitistic dimension, torsion theory}
\address{\pn Fran\c cois Huard; Department of mathematics,
Bishop's University. Sherbrooke, Qu\'ebec, Canada,  J1M1Z7.}
\email{fhuard@ubishops.ca}
\address{\pn Marcelo Lanzilotta; Centro de Matem\'atica (CMAT),
Instituto de Matem\'atica y Estad\'istica Rafael Laguardia (IMERL),
Universidad de la Rep\'ublica,
Igu\'a 4225, C.P. 11400, Montevideo, Uruguay.}
\email{marclan@cmat.edu.uy, marclan@fing.edu.uy}
\address{\pn David Smith; Department of mathematics,
Bishop's University. Sherbrooke, Qu\'ebec, Canada,  J1M1Z7.}
\email{dsmith@ubishops.ca}

\begin{abstract} 
Let $A$ be an artinian algebra, and let $\mathcal{C}$ be a subcategory of mod$A$ that is closed under extensions. When $\mathcal{C}$ is closed under kernels of epimorphisms (or closed under cokernels of monomorphisms), we describe the smallest class of modules that filters $\mathcal{C}$.  As a consequence, we obtain sufficient conditions for the finitistic dimension of an algebra over a field to be finite.  We also apply our results to the torsion pairs. In particular, when a torsion pair is induced by a tilting module, we show that the  smallest classes of modules that filter the torsion and torsion-free classes are completely compatible with the quasi-equivalences of Brenner and Butler.
\end{abstract}

\maketitle

\let\thefootnote\relax\footnote{The third author was supported by a discovery grant from NSERC of Canada.}

\section*{Introduction}

Let $A$ be an artin algebra and let $\mod A$ be the category of finitely generated $A$-modules. In \cite{Z95}, B. Zimmermann Huisgen asked the following question: \lq\lq What is the structure of module in $\mod A$ having finite projective dimension, in contrast to the structure of those of infinite projective dimension?\rq\rq\,  In the same paper, the author pointed out that a promising key to such structure theorems was provided by Auslander and Reiten for the case where $\P$ is contravariantly finite.

The result of Auslander and Reiten \cite[(3.8)]{AR91} asserts that if $\X$ is a resolving and contravariantly finite subcategory of $\mod A$, then the minimal right $\X$-approximations $f_i:X_i\rightarrow S_i$ of the simple $A$-modules have the following property : a module $M$ in $\mod A$ is in $\X$ if and only if it is a direct summand of a $\{X_1, X_2, \dots, X_n\}$-filtered module, that is, $M$ is a direct summand of an $A$-module $\widetilde{M}$ for which there exists a sequence of submodules $0=\widetilde{M_0}\subset \widetilde{M_1}\subset \cdots \subset \widetilde{M_n}=\widetilde{M}$ such that $\widetilde{M_i}/\widetilde{M_{i-1}}\in \{X_1, X_2, \dots, X_n\}$ for all $i=1, 2, \dots, n$.  Since then, the minimal right $\P$-approximations of the simple modules were commonly considered as the basic building blocks for $\P$, in case $\P$ is contravariantly finite in $\mod A$. However, this approach has two major obstacles.  Firstly, for few classes of algebras is it known whether $\P$ is contravariantly finite.  Secondly, when $\P$ is contravariantly finite and $M\in\P$, it is in practice often challenging to find $\widetilde{M}$ and its $\{X_1, X_2, \dots, X_n\}$-filtration. Consequently, the right minimal $\P$-approximations of the simples are more theoretical building blocks than practical building blocks.  Our first goal was to fix these issues. However, our approach applies to more general situations.

Let $A$ be an artinian algebra, and let $\C$ be a subcategory of $\mod A$ that is closed under extensions and closed under kernels of epimorphisms (or closed under cokernels of monomorphisms).  In this paper, we describe the smallest class of modules, denoted by $\CQ$ (or $\CS$) (see Subsection \ref{Definitions and Elementary Properties} for details), having the property that a given module $M$ is in $\C$ if and only if $M$ has a ${\CQ}$-filtration (or a ${\CS}$-filtration, respectively).  In the case where $\C$ is closed under kernels of epimorphisms and cokernels of monomorphisms in $\mod A$, we have ${\CQ}={\CS}$. In particular, $\ov{\P}=\und{\P}$. Consequently, our approach does not rely on the contravariantly finiteness of $\P$ (or, more generally, of $\C$). Moreover, when ${\CQ}$ is known and $M\in \C$, it is generally easy to construct the ${\CQ}$-filtration of $M$, and similarly for ${\CS}$. The drawback of this approach however lies on the following catch: the classes ${\CQ}$ and ${\CS}$ are generally infinite, and their elements are often difficult to track.

Our results apply to several situations, as encountering subcategories that are closed under extensions and kernels of epimorphisms (or cokernels of monomorphisms) is frequent in representation theory of algebras.  For instance, in addition to $\P$, our results apply to any subcategory of $\mod A$ that is submodule-closed or quotient-closed and to the classes involved in any torsion or cotorsion pair.  

The ideas contained in this paper are elementary.  Nevertheless, to our knowledge, nothing has ever been written along these lines.  Only traces of the elements in $\ov{\P}=\und{\P}$ were found in works of D. Happel \cite{Hap} and J. \v{S}t'ov\'i\v{c}ek \cite{Sto04}, and more recently in the Ph.D. thesis of G. Mata \cite{Mata15}.

Our paper is organized as follows.  In Section 1, we give the necessary definitions and prove the main theorem of this paper.

\vspace{0.1cm}
\noindent\textbf{Theorem} (Theorem \ref{Thm 1.5})\textbf{.}  Let $A$ be an artinian algebra and let $\C$ be an extension-closed subcategory of $\mod A$.
\begin{enumerate}
\item[(a)] If $\C$ is closed under kernels of epimorphisms, then $\CQ$ is the smallest class of modules that filters $\C$.
\item[(b)] If $\C$ is closed under cokernels of monomorphisms, then $\CS$ is the smallest class of modules that filters $\C$. 
\end{enumerate}

\noindent  As a nice consequence, we obtain a proof of the following result concerning the finitistic dimension of $A$, denoted by $\findim A$. Observe that this result, and the idea of the proof, first appeared in an unpublished work by Happel~\cite{Hap}.  However, the use of a result by Jensen and Lenzing~\cite{JL89}, that was apparently unknown to Happel, allows us to provide a slightly more detailed and rigorous proof. 

\vspace{0.1cm}
\noindent\textbf{Theorem} (Theorem \ref{Thm Happel})\textbf{.}
Let $A$ be a finite-dimensional algebra over a field $k$. If the length of every indecomposable module in $\und{\P}$ is bounded, then $\findim A<\infty$.
\vspace{0.1cm}

We also show the non-uniqueness of the $\CQ$- or $\CS$-filtrations, showing that the Jordan-H\"older condition does not hold in this context. We also give some remarks on the position of the elements of $\CQ$ and $\CS$ in the Auslander-Reiten quiver of $\mod A$.

Section 2 is devoted to the study of inclusions of subcategories satisfying some conditions.  In particular, we show in Example \ref{ex findim} that $\findim A\leq n$ if and only if $\und{\mathcal{P}^{\leq n+1}}\subseteq \mathcal{P}^{\leq n}$, where $\mathcal{P}^{\leq n}$ stands for the class of all finitely generated $A$-modules of projective dimension at most $n$.


Finally, Section 3 is devoted to the study of the torsion and torsion-free classes of any torsion pair.  In particular, when a torsion pair $(\mathcal{T}(T), \mathcal{F}(T))$ is induced by a tilting $A$-module $T$, we show that the our construction is totally compatible with the Brenner-Butler Theorem in the following sense:

\vspace{0.1cm}
\noindent\textbf{Theorem} (Theorem \ref{Prop 3.8})\textbf{.} 
Let $T$ be a tilting $A$-module and $B=\End_AT$.  Let  $(\mathcal{T}, \mathcal{F})$ and  $(\mathcal{X}, \mathcal{Y})$ be the torsion pairs induced by $T$ in $\mod A$ and $\mod B$ respectively.
\begin{enumerate}
\item[(a)] If an $A$-module $M$ is $\und{\T}$-(co)filtered, then the $B$-module $\Hom_A(T, M)$ is $\ov{\Y}$-(co)filtered.
\item[(b)] If a $B$-module $M$ is $\ov{\Y}$-(co)filtered, then the $A$-module $M\otimes_B T$ is $\und{\T}$-(co)filtered.  
\item[(c)] If an $A$-module $M$ is $\ov{\F}$-(co)filtered, then the $B$-module $\Ext^1_A(T, M)$ is $\und{\X}$-(co)filtered.
\item[(d)] If a $B$-module $M$ is $\und{\X}$-(co)filtered, then the $A$-module $\Tor^B_1(M, T)$ is $\ov{\F}$-(co)filtered. 
\end{enumerate}
\vspace{0.1cm}

Our work is completed with a discussion on the modules having (proper) standard or (proper) costandard filtrations, in connection with quasi-hereditary and standardly stratified algebras.

\section{Filtering subcategories of modules that are closed under extensions}

In this paper, $A$ will stand for an artinian algebra and we will denote by $\mod A$ the category of finitely generated $A$-modules. Given $M\in\mod A$, we will denote by $\pd  M$ its projective dimension and by $\ell(M)$ its \textbf{length}, that is the number of simple $A$-modules appearing in any composition series of $M$. Finally, $\C$ will be a nonzero subcategory of $\mod A$, that is a subcategory of $\mod A$ containing at least one nonzero module. Also, we will use the symbols \lq\lq$\subset$\rq\rq \, to denote a strict inclusion and \lq\lq$\subseteq$\rq\rq \, to denote a possibly non-strict inclusion.

\subsection{Definitions and elementary properties}\label{Definitions and Elementary Properties}

Let $A$ be an artinian algebra and $\mathcal{C}$ be a nonzero subcategory of $\mod A$. Let
\[
\CS =\{ M\in \C \ | \ M\neq 0 \text{ and } 0\neq L \subset M \text{ implies } L\notin \C\}
\]
and
\[
\CQ =\{ M\in \C \ | \ M\neq 0 \text{ and } 0\neq L \subset M \text{ implies } M/L\notin \C\}.
\]
In other words, $\CS$ consists of the nonzero modules in $\C$ whose proper nonzero submodules are not in $\C$, while $\CQ$ consists of the nonzero modules in $\C$ whose proper nonzero quotients are not in $\C$. These classes of modules will play a determinant role in the sequel. We start by proving of elementary properties.

\begin{proposition}\label{Lem 1.6}
Let $\C$ be a nonzero subcategory of $\mod A$.
If $M$ is a nonzero module of minimal (positive) length in $\C$, then $M\in \CS\cap \CQ$.
\end{proposition}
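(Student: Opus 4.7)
The plan is to argue directly from the definitions, using minimality of $\ell(M)$ to rule out both nonzero proper submodules and nonzero proper quotients belonging to $\mathcal{C}$. Since $A$ is artinian, every module in $\mod A$ has a well-defined finite length, so ``minimal positive length'' is meaningful. The statement then reduces to two parallel applications of strict length monotonicity.

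For the $\und{\mathcal{C}}$ part, I would take an arbitrary nonzero proper submodule $L \subset M$. Because the inclusion is strict, $\ell(L) < \ell(M)$, and because $L \neq 0$, the length $\ell(L)$ is still positive. If $L$ belonged to $\mathcal{C}$, it would be a nonzero module of $\mathcal{C}$ with strictly smaller positive length than $M$, contradicting the choice of $M$. Hence $L \notin \mathcal{C}$, and by definition $M \in \und{\mathcal{C}}$.

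For the $\ov{\mathcal{C}}$ part, I would repeat the argument on the quotient side: given $0 \neq L \subset M$, the quotient $M/L$ is nonzero (because $L$ is a proper submodule) and satisfies $\ell(M/L) = \ell(M) - \ell(L) < \ell(M)$, with $\ell(M/L)$ still positive (because $L$ is a proper submodule). If $M/L$ were in $\mathcal{C}$, it would again contradict minimality of $\ell(M)$, so $M/L \notin \mathcal{C}$ and $M \in \ov{\mathcal{C}}$.

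There is really no obstacle here: the proposition is essentially an unwinding of definitions once one observes that the length function on $\mod A$ is strictly monotonic under proper submodules and proper quotients. The only thing to be a little careful about is to explicitly invoke both strictness of the inclusion $L \subset M$ (to ensure $M/L \neq 0$) and nonvanishing of $L$ (to ensure $\ell(M/L) < \ell(M)$), so that the minimality hypothesis on $\ell(M)$ can actually be triggered on both sides.
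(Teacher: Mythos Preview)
Your proof is correct and follows essentially the same approach as the paper's own proof: both argue that for a proper nonzero submodule $L\subset M$ one has $\ell(L)<\ell(M)$ and $\ell(M/L)<\ell(M)$, so minimality of $\ell(M)$ forces $L\notin\C$ and $M/L\notin\C$. Your version is slightly more explicit about why the lengths are strictly positive, but the argument is the same.
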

\begin{proof}
Suppose that $M$ is a nonzero module of minimal length in $\C$.  If $L$ is a proper nonzero submodule of $M$, then $\ell(L)<\ell(M)$ and $\ell(M/L)<\ell(M)$.  The minimality of $M$ gives $L\notin \C$ and  $M/L\notin \C$, so $M\in \CS\cap \CQ$.
\end{proof}

\begin{lemma}\label{Lem 3.11} Let $\C$ be a nonzero subcategory of $\mod A$.
\begin{enumerate}
\item[(a)] If $\C$ is closed under submodules, then $\CS=\{\text{simple $A$-module in }\C\}$.
\item[(b)] If $\C$ is closed under quotients, then $\CQ=\{\text{simple $A$-module in }\C\}$.
\end{enumerate}
\end{lemma}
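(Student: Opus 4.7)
The plan is to verify each inclusion of (a) and (b) directly from the definitions of $\CS$ and $\CQ$, using the relevant closure property of $\C$ in only one direction. The proofs are essentially symmetric under passing from submodules to quotients, so I would handle them in parallel.

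For (a), I would first show the nontrivial inclusion $\CS \subseteq \{\text{simples in }\C\}$. Take $M \in \CS$ and suppose for contradiction that $M$ is not simple, so there exists a proper nonzero submodule $L \subset M$. Since $\C$ is closed under submodules and $M \in \C$, we get $L \in \C$. But the defining property of $\CS$ forces $L \notin \C$, a contradiction. Hence $M$ must be simple (and it lies in $\C$ by definition of $\CS$). For the reverse inclusion, if $M$ is a simple module in $\C$, then $M$ has no proper nonzero submodules, so the implication in the definition of $\CS$ holds vacuously, giving $M \in \CS$.

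For (b), the argument is dual. Given $M \in \CQ$ not simple, pick a proper nonzero submodule $L \subset M$; then $M/L$ is a proper nonzero quotient of $M \in \C$, so by the closure of $\C$ under quotients, $M/L \in \C$. This contradicts the definition of $\CQ$, which requires $M/L \notin \C$. Conversely, a simple module in $\C$ lies vacuously in $\CQ$.

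I do not anticipate any real obstacle here: both directions of each inclusion follow immediately from the definitions, and the only ingredient used is the stated closure hypothesis on $\C$. The proof can be written in a few lines.
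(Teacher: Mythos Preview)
Your proposal is correct and follows essentially the same approach as the paper. The only cosmetic differences are that the paper phrases the forward inclusion via the existence of a simple submodule (resp.\ simple quotient) rather than an arbitrary proper nonzero one, and it cites Proposition~\ref{Lem 1.6} for the reverse inclusion instead of invoking the vacuous truth directly; your argument is slightly more self-contained but the logic is identical.
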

\begin{proof}
This is clear since every nonzero module has a simple submodule and a simple quotient, and every simple module in $\C$ is in $\CQ\cup\CS$ by (\ref{Lem 1.6}).
\end{proof}

In the particular case where we take $\C=\mod A$, the above proposition tells that ${\CS}=\{\text{simple modules in }\mod A\}={\CQ}$, while in general ${\CS}\neq {\CQ}$.

The following observation indicates that, under a very mild hypothesis, the modules in $\CS$ and $\CQ$ are indecomposable.

\begin{proposition}\label{Rem 1.1 (c)}
Let $\C$ be a nonzero subcategory of $\mod A$.
If $(\mod A)\setminus\C$ is closed under direct sums, then every element in $\CS\cup \CQ$ is indecomposable.
\end{proposition}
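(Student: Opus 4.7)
The plan is a short direct argument by contradiction, handled symmetrically for the two cases $\CS$ and $\CQ$.

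First I would fix $M\in\CS\cup\CQ$ and suppose, for contradiction, that $M=M_1\oplus M_2$ is a decomposition with both $M_1$ and $M_2$ nonzero. Since $M\neq 0$ is in $\C$ and each $M_i$ is a proper nonzero submodule of $M$ (via the canonical embeddings), the defining property of $\CS$ applied in the first case forces $M_1\notin\C$ and $M_2\notin\C$. In the second case, each $M_i$ is also a proper nonzero quotient of $M$ (via the canonical projections), so the defining property of $\CQ$ equally forces $M_1\notin\C$ and $M_2\notin\C$.

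Then I would apply the hypothesis: since $(\mod A)\setminus\C$ is closed under direct sums and both $M_1$ and $M_2$ lie in it, the sum $M_1\oplus M_2=M$ must also lie in $(\mod A)\setminus\C$. This contradicts $M\in\C$, so no such nontrivial decomposition exists and $M$ is indecomposable.

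There is no real obstacle here; the only point to watch is to invoke the correct one of the two defining conditions (submodule vs. quotient) in the $\CS$ case and the $\CQ$ case, which is why the proof splits naturally into two parallel clauses. The hypothesis that the complement is closed under direct sums is used in exactly one place, namely the final step that upgrades ``each summand outside $\C$'' to ``the whole module outside $\C$.''
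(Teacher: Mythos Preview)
Your proof is correct and essentially identical to the paper's argument: both proceed by assuming a nontrivial decomposition $M=M_1\oplus M_2$, use that each $M_i$ is simultaneously a proper nonzero submodule and a proper nonzero quotient of $M$ to conclude $M_i\notin\C$, and then invoke closure of the complement under direct sums for the contradiction. The only cosmetic difference is that the paper handles the $\CS$ and $\CQ$ cases in one sentence (noting $M_i\cong M/M_{3-i}$), while you split them into two parallel clauses.
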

\begin{proof}
Suppose that $M\in\CS\cup\CQ$ and $M = M_1\oplus M_2$ for some nonzero $A$-modules $M_1$ and $M_2$.  Since $M_1$ and $M_2$ are submodules of $M$, and $M_1\cong M/M_2$ and $M_2\cong M/M_1$, it follows from the definitions of $\CS$ and $\CQ$ that  $M_1\notin \C$ and $M_2\notin \C$. Since $(\mod A)\setminus\C$ is closed under direct sums, we get that $M=M_1\oplus M_2$ is not in $\C$, a contradiction.  So $M$ is indecomposable.
\end{proof}

The following lemma will play an important role in the proof of our main theorem.

\begin{lemma}\label{Lem 1.3} Let $\C$ be a nonzero subcategory of $\mod A$. Let $M$ be a nonzero module in $\C$.
\begin{enumerate}
\item[(a)] There exists a submodule $L$ of $M$ such that $L\in \CS$. 
\item[(b)] There exists a submodule $L$ of $M$ such that $M/L\in \CQ$. 
\end{enumerate}
\end{lemma}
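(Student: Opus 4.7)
The plan is to prove both parts by a straightforward minimality argument based on the length function $\ell$, exploiting the fact that $A$ is artinian (so every module in $\mod A$ has finite length, and every nonempty subset of $\mathbb{N}$ has a minimum). No closure assumption on $\C$ is needed.

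For part (a), I would consider the collection
\[
\mathcal{L} = \{L \mid 0 \neq L \subseteq M \text{ and } L \in \C\}.
\]
This collection is nonempty since $M$ itself belongs to it. I then choose $L \in \mathcal{L}$ of minimal length $\ell(L)$. For any proper nonzero submodule $L' \subset L$ we have $\ell(L') < \ell(L)$, hence by the minimality of $\ell(L)$ in $\mathcal{L}$, the submodule $L'$ cannot lie in $\C$. This is exactly the defining property of $\CS$, so $L \in \CS$.

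For part (b), the argument is dual. I would consider
\[
\mathcal{K} = \{K \mid K \subseteq M,\ M/K \neq 0,\ M/K \in \C\},
\]
which is nonempty because $K = 0$ belongs to it. I pick $K \in \mathcal{K}$ minimizing $\ell(M/K)$. Any proper nonzero quotient of $M/K$ that lies in $\C$ is of the form $M/K'$ for some submodule $K \subsetneq K' \subsetneq M$; then $K' \in \mathcal{K}$ and $\ell(M/K') < \ell(M/K)$, contradicting minimality. Hence $M/K \in \CQ$, and $L := K$ is the desired submodule.

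There is no serious obstacle here: the only subtlety is to take the minimum over the right set — submodules already in $\C$ for (a), and submodules producing a nonzero quotient already in $\C$ for (b) — rather than over all submodules of $M$. This lemma, combined with Proposition \ref{Lem 1.6} and the closure hypotheses of Theorem \ref{Thm 1.5}, will provide the inductive step (splitting off a factor in $\CS$ from the bottom, or in $\CQ$ from the top) that produces the desired filtrations.
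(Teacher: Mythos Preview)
Your proof is correct and follows essentially the same idea as the paper's: both are minimality arguments exploiting that finitely generated modules over an artinian algebra have finite length. The paper phrases it as an iterative descent (for (a)) or ascent (for (b)) of submodules that must terminate by the descending/ascending chain condition, whereas you go directly to a length-minimal element of the relevant set; the two formulations are interchangeable here.
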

\begin{proof}
(a). Let $M$ be a nonzero module in $\C$.  If $M\in \CS$, there is nothing to show.  Else, there exists a proper nonzero submodule $M_1$ of $M$ such that $M_1\in\C$.  If $M_1\in \CS$, we are done. Repeating this process gives a sequence of nonzero submodules
\[
\cdots \subset M_3 \subset M_2 \subset M_1 \subset M
\]
that must stop since $A$ is artinian.  So $M_i\in\CS$ for some $i$.

(b).  Let $M\in\C$.  If $M\in \CQ$, there is nothing to show.  Else, there exists a proper nonzero submodule $M_1$ of $M$ such that $M/M_1\in\C$. If $M/M_1\in \CQ$, we are done. Else, there exists a proper submodule $M_2$ of $M$ containing properly $M_1$ such that $\frac{M/M_1}{M_2/M_1}\cong M/M_2\in \C$. Repeating this process gives a sequence of submodules 
\[
M_1\subset M_2 \subset M_3 \subset \cdots \subset M
\]
that must stop since $A$ is noetherian (since artinian).  So $M/M_i\in\CQ$ for some $i$.
\end{proof}

\subsection{Subcategories closed under kernels of epimorphisms or cokernels of monomorphisms}

The following result gives situations where $\CS$ and $\CQ$ can be compared.

\begin{lemma}\label{Rem 1.1(a)(b)} Let $\C$ be a nonzero subcategory of $\mod A$.
\begin{enumerate}
\item[(a)] If $\C$ is closed under kernels of epimorphisms, then $\CS\subseteq \CQ$.
\item[(b)] If $\C$ is closed under cokernels of monomorphisms, then $\CQ\subseteq \CS$.
\item[(c)] If $\C$ is closed under kernels of epimorphisms and cokernels of monomorphisms, then $\CQ= \CS$.
\end{enumerate}
\end{lemma}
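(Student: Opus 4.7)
The plan is to exploit the defining properties of $\CS$ and $\CQ$ directly, using the short exact sequences $0\to L\to M\to M/L\to 0$ associated to an arbitrary proper nonzero submodule $L$ of a module $M$ in one of these classes. The key observation is that each of the hypotheses (a) and (b) allows us to deduce membership in $\C$ of one of $L$ or $M/L$ from the membership of the other two terms of such a sequence.

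For part (a), I would take $M\in\CS$ and any proper nonzero submodule $L\subset M$, and aim to show $M/L\notin \C$. Assume for contradiction that $M/L\in\C$. Then the canonical surjection $M\twoheadrightarrow M/L$ is an epimorphism between two modules of $\C$, so by the hypothesis its kernel $L$ lies in $\C$. Since $L$ is a proper nonzero submodule of $M$, this contradicts $M\in\CS$. Hence $M/L\notin\C$, and $M\in\CQ$.

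For part (b), the argument is dual: take $M\in\CQ$ and a proper nonzero submodule $L\subset M$, and aim to show $L\notin\C$. If instead $L\in\C$, then the inclusion $L\hookrightarrow M$ is a monomorphism between modules of $\C$, so by the cokernel-closure hypothesis its cokernel $M/L$ lies in $\C$. Since $L\neq 0$ and $L\neq M$, the quotient $M/L$ is a proper nonzero quotient of $M$ lying in $\C$, contradicting $M\in\CQ$. Hence $L\notin\C$ and $M\in\CS$. Part (c) then follows immediately by combining (a) and (b).

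There is essentially no obstacle here: the proof is a direct unwinding of the definitions through a one-line contradiction in each case. The only thing that requires a moment of care is checking that the module produced (the kernel in (a), the cokernel in (b)) satisfies the strict inclusion/nonvanishing conditions needed to invoke the $\CS$ or $\CQ$ defining property, but this is automatic from $0\neq L\subsetneq M$.
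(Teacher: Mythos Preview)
Your proof is correct and follows essentially the same approach as the paper: both argue by contradiction, using that the kernel of the canonical epimorphism $M\to M/L$ (respectively, the cokernel of the inclusion $L\hookrightarrow M$) is forced into $\C$ by the closure hypothesis, contradicting $M\in\CS$ (respectively, $M\in\CQ$). The paper only spells out (a) and declares (b) dual and (c) immediate, exactly as you do.
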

\begin{proof}
We only prove (a); the proof of (b) is dual while (c) clearly follows from (a) and (b).  Let $M\in\CS$. If $M\notin \CQ$, then there exists a proper nonzero submodule $L$ of $M$ such that $M/L\in\C$. Since $L$ is the kernel of the canonical epimorphism $M\rightarrow M/L$, it follows from the hypothesis that $L\in\C$, a contradiction to $M\in\CS$.  So $M\in \CQ$.
\end{proof}

\begin{corollary}\label{Lem 1.4} Let $\C$ be a nonzero subcategory of $\mod A$. Let $M\in \C$.
\begin{enumerate}
\item[(a)] If $\C$ is closed under kernels of epimorphisms, then there exists a submodule $L$ of $M$ such that $L\in \CQ$. 
\item[(b)] If $\C$ is closed under cokernels of monomorphisms, then there exists a submodule $L$ of $M$ such that $M/L\in \CS$. 
\end{enumerate}
\end{corollary}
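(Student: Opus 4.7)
The plan is to observe that Corollary \ref{Lem 1.4} is essentially a direct combination of the two preceding results: Lemma \ref{Lem 1.3} produces a submodule (or a quotient) belonging to $\CS$ (or $\CQ$), while Lemma \ref{Rem 1.1(a)(b)} tells us that under the stated closure hypotheses these two classes are comparable. So the proof should be very short, with no new ideas introduced.

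For part (a), I would start from a nonzero module $M\in\C$ and apply Lemma \ref{Lem 1.3}(a) to produce a submodule $L\subseteq M$ with $L\in\CS$. Since $\C$ is closed under kernels of epimorphisms, Lemma \ref{Rem 1.1(a)(b)}(a) gives $\CS\subseteq\CQ$, hence this same $L$ lies in $\CQ$, as required.

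For part (b), I would dually apply Lemma \ref{Lem 1.3}(b) to obtain a submodule $L\subseteq M$ with $M/L\in\CQ$, and then invoke Lemma \ref{Rem 1.1(a)(b)}(b), which gives $\CQ\subseteq\CS$ under the assumption that $\C$ is closed under cokernels of monomorphisms, to conclude that $M/L\in\CS$.

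There is no real obstacle here: the whole content is in recognising that Lemma \ref{Lem 1.3} already gives us the existence of the desired submodule or quotient in $\CS$ or $\CQ$, and the closure hypotheses of the corollary are precisely what allow us to upgrade the conclusion via Lemma \ref{Rem 1.1(a)(b)}. The only thing worth double-checking is that one is matching up the right item of Lemma \ref{Lem 1.3} with the right inclusion of Lemma \ref{Rem 1.1(a)(b)} (submodule with $\CS\subseteq\CQ$ in (a); quotient with $\CQ\subseteq\CS$ in (b)).
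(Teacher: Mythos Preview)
Your proposal is correct and matches the paper's own proof exactly: the paper simply states that the corollary follows directly from Lemma~\ref{Lem 1.3} and Lemma~\ref{Rem 1.1(a)(b)}, which is precisely the combination you describe.
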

\begin{proof}
This follows directly from (\ref{Lem 1.3}) and (\ref{Rem 1.1(a)(b)}).
\end{proof}

The following result gathers interesting observations about the Ext and Tor functors.

\begin{proposition}\label{Prop 1.9} Let $\C$ be a nonzero subcategory of $\mod A$, and let $M\in \mod A$. 
\begin{enumerate}
\item[(a)] If $\C$ is closed under kernels of epimorphisms, then for all integer $n\geq 0$ we have 
\begin{enumerate}
\item[(i)] $\Ext^n_A(-,M)\mid_{\C}=0$ if and only if $\Ext^n_A(-,M)\mid_{\CQ}=0$.
\item[(ii)] $\Ext^n_A(M, -)\mid_{\C}=0$ if and only if $\Ext^n_A(M, -)\mid_{\CQ}=0$.
\item[(iii)] $\Tor_n^A(-,M)\mid_{\C}=0$ if and only if $\Tor_n^A(-,M)\mid_{\CQ}=0$.
\item[(iv)] $\Tor_n^A(M, -)\mid_{\C}=0$ if and only if $\Tor_n^A(M, -)\mid_{\CQ}=0$.
\end{enumerate}

\item[(b)] If $\C$ is closed under cokernels of monomorphisms, then for all integer $n\geq 0$ we have  
\begin{enumerate}
\item[(i)] $\Ext^n_A(-,M)\mid_{\C}=0$ if and only if $\Ext^n_A(-,M)\mid_{\CS}=0$.
\item[(ii)] $\Ext^n_A(M, -)\mid_{\C}=0$ if and only if $\Ext^n_A(M, -)\mid_{\CS}=0$.
\item[(iii)] $\Tor_n^A(-,M)\mid_{\C}=0$ if and only if $\Tor_n^A(-,M)\mid_{\CS}=0$.
\item[(iv)] $\Tor_n^A(M, -)\mid_{\C}=0$ if and only if $\Tor_n^A(M, -)\mid_{\CS}=0$.
\end{enumerate}\end{enumerate}
\end{proposition}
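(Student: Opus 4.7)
The ``only if'' direction in each of the eight assertions is immediate because $\CQ\subseteq\C$ and $\CS\subseteq\C$. For the ``if'' direction, I would adopt a single strategy for both (a) and (b): strong induction on $\ell(N)$ for $N\in\C$, fed by a short exact sequence coming from Lemma \ref{Lem 1.3} and passed through the long exact sequence of the relevant Ext or Tor functor.

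For part (a), let $F$ denote whichever of $\Ext^n_A(-,M)$, $\Ext^n_A(M,-)$, $\Tor_n^A(-,M)$, $\Tor_n^A(M,-)$ is under consideration, and assume $F|_{\CQ}=0$. Given $N\in\C$, Lemma \ref{Lem 1.3}(b) supplies a submodule $L\subseteq N$ with $N/L\in\CQ$. If $L=0$ then $N=N/L\in\CQ$ and we are done. Otherwise $L$ is proper and nonzero; applying the closure of $\C$ under kernels of epimorphisms to the canonical surjection $N\twoheadrightarrow N/L$ gives $L\in\C$ with $\ell(L)<\ell(N)$. Strong induction then yields $F(L)=0$, and the standing hypothesis yields $F(N/L)=0$. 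The long exact sequence of $F$ attached to $0\to L\to N\to N/L\to 0$, taken in the variable in which $F$ acts, sandwiches $F(N)$ between two vanishing terms, forcing $F(N)=0$.

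Part (b) is entirely dual: for $N\in\C$, use Lemma \ref{Lem 1.3}(a) to produce a nonzero submodule $L\in\CS$ of $N$; if $L=N$ we are done, and otherwise closure of $\C$ under cokernels of monomorphisms applied to $L\hookrightarrow N$ gives $N/L\in\C$ with $\ell(N/L)<\ell(N)$, after which the same long-exact-sequence argument with the roles of $L$ and $N/L$ interchanged completes the induction. I do not foresee a serious obstacle. The induction base $\ell(N)=1$ is automatic since a simple module in $\C$ vacuously belongs to $\CQ\cap\CS$ (consistent with Proposition \ref{Lem 1.6}); the only small bookkeeping point is to apply the long exact sequence of $F$ in the correct variable for each of the four functors, so that the two terms we know to vanish really flank $F(N)$.
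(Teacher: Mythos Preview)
Your proof is correct and follows essentially the same idea as the paper: a length argument combined with the long exact sequence. The only organizational difference is that the paper argues by minimal counterexample (take $N\in\C$ of minimal length with $F(N)\neq 0$ and show directly from the definition of $\CQ$ that $N\in\CQ$), whereas you run a direct strong induction and invoke Lemma~\ref{Lem 1.3} to produce the decomposition; these are two phrasings of the same argument.
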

\begin{proof}
We only prove (a)(i); the other proofs can be obtained by analogue arguments, sometimes using $\Tor$ instead of $\Ext$. The necessity is obvious since $\CQ\subseteq \C$.  For the sufficiency, we will show that if $\Ext^n_A(-,M)\mid_{\C}\neq 0$ then $\Ext^n_A(-,M)\mid_{\CQ}\neq 0$.  Suppose that $\Ext^n_A(-,M)\mid_{\C}\neq 0$. Let $N$ be a module of minimal length for the following properties: $N\in \C$ and  $\Ext^n_A(N,M)\neq 0$.  To prove the statement, it is sufficient to show that  $N\in\CQ$. Clearly, $N\neq 0$ since $\Ext^n_A(N,M)\neq 0$. Now, let $L$ be a proper nonzero submodule of $N$ and suppose, for sake of contradiction, that $N/L\in \C$.  Since $\C$ is closed under kernels of epimorphisms, we also have $L\in\C$. Moreover, the minimality of $N$ gives $\Ext^n_A(L,M)= 0$ and $\Ext^n_A(N/L,M)= 0$. But the long exact sequence of homology
\[
\cdots\rightarrow \Ext_A^n(N/L, M)\rightarrow \Ext_A^n(N, M)\rightarrow\Ext_A^n(L, M)\rightarrow \cdots
\]
gives $\Ext_A^n(N, M)=0$, a contradiction. So $N/L\notin \C$ and $N\in \CQ$.
\end{proof}

\subsection{Subcategories closed under extensions}

Let $A$ be an artinian algebra and $\X$ be a class of modules in $\mod A$.
We say that an $A$-module $M$ is \textbf{filtered by $\X$} if there exists a sequence of proper inclusions of submodules of $M$
\[
0=M_0\subset M_1 \subset \cdots \subset M_{n-1}\subset M_n=M
\]
 such that $M_i/M_{i-1}\in \X$ for all $i\in\{1, 2, \dots, n\}$. Such a sequence is called a \textbf{$\X$-filtration of $M$}.  We denote by $\mathcal{F}(\X)$ the set of all $\X$-filtered modules. Finally, if $\C$ is a subcategory of $\mod A$, we say that \textbf{$\X$ filters $\C$}, and we write $\F(\X)=\C$ if every nonzero module in $\C$ is $\X$-filtered.

Similarly, we say that an $A$-module $M$ is \textbf{cofiltered by $\X$} if there exists a sequence of proper epimorphisms 
\[
\SelectTips{eu}{10}\xymatrix{M=M_n\ar[r]^{f_n}&M_{n-1} \ar[r]^{f_{n-1}} & \cdots \ar[r]^{f_2}& M_1 \ar[r]^-{f_1} & M_0=0}
\]
such that \Ker$f_i\in\X$ for all $i\in\{1, 2, \dots, n\}$. Such a sequence is called a \textbf{$\X$-cofiltration of $M$}.  We denote by $co\mathcal{F}(\X)$ the set of all $\X$-cofiltered modules. Finally, if $\C$ is a subcategory of $\mod A$, we say that \textbf{$\X$ cofilters $\C$}, and we write $co\F(\X)=\C$, if every nonzero module in $\C$ is $\X$-cofiltered.

The following proposition will be useful in the sequel.  Its easy verification is left to the reader.

\begin{proposition}\label{Prop 1.7} 
Let $\C$ be an extension-closed subcategory of $\mod A$, and $\X$ be a class of modules in $\C$. Then $\mathcal{F}(\X)=co\mathcal{F}(\X)$.  More precisely, 
\begin{enumerate}
\item[(a)] if $M\in \mod A$ and \[
0=M_0\subset M_1 \subset \cdots \subset M_{n-1}\subset M_n=M
\]
is a $\X$-filtration of $M$, then the induced sequence of epimorphisms
\[
\SelectTips{eu}{10}\xymatrix{M=M/M_0\ar[r]^-{f_1}&M/M_{1} \ar[r]^{f_{2}} & \cdots \ar[r]^-{f_{n-2}}& M/M_{n-1} \ar[r]^-{f_{n-1}} & M/M_n=0}
\]
is a $\X$-cofiltration of $M$. Moreover, $\Ker f_i\cong M_i/M_{i-1}$ for all $i\in\{1, 2, \dots, n-1\}$.

\item[(b)] if $M\in \mod A$ and \[
\SelectTips{eu}{10}\xymatrix{M=M_n\ar[r]^{f_n}&M_{n-1} \ar[r]^{f_{n-1}} & \cdots \ar[r]^{f_2}& M_1 \ar[r]^-{f_1} & M_0=0}
\]
is a $\X$-cofiltration of $M$, then the induced sequence of inclusions
\[
0\subset \Ker(f_n) \subset \Ker(f_{n-1}f_n) \subset \cdots\subset \Ker(f_1\cdots f_n)=M
\]
is a $\X$-filtration of $M$. Moreover, $\frac{\Ker(f_i\cdots f_n)}{\Ker(f_{i+1}\cdots f_n)}\cong \Ker f_i$ for all $i\in\{1, 2, \dots, n-1\}$.
\end{enumerate}
\end{proposition}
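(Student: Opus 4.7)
The plan is to verify parts (a) and (b) by direct construction via the third isomorphism theorem; once these are in place, the identity $\F(\X) = co\F(\X)$ is immediate, since every $\X$-filtered module becomes $\X$-cofiltered and conversely. I note in passing that the extension-closedness hypothesis on $\C$ is not actually used in what follows — it enters only through the stipulation that $\X$ sit inside $\C$ (so that both sides of the equality are comparable subclasses of $\C$).

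For part (a), starting from the filtration $0 = M_0 \subset M_1 \subset \cdots \subset M_n = M$, I would let $f_i \colon M/M_{i-1} \to M/M_i$ be the canonical projection induced by the inclusion $M_{i-1}\subseteq M_i$. Each $f_i$ is surjective, and since $M_{i-1}\subsetneq M_i$ it is not an isomorphism, so the resulting sequence is a chain of proper epimorphisms. The third isomorphism theorem gives $\Ker f_i \cong M_i/M_{i-1}$, which belongs to $\X$ by hypothesis, producing the required $\X$-cofiltration of $M$.

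For part (b), starting from the cofiltration $M = M_n \xrightarrow{f_n} M_{n-1} \xrightarrow{f_{n-1}} \cdots \xrightarrow{f_1} M_0 = 0$, I would set $K_i := \Ker(f_i f_{i+1}\cdots f_n) \subseteq M$ for $i \in \{1, \ldots, n\}$, together with $K_{n+1} := 0$. The inclusions $K_{i+1}\subseteq K_i$ are automatic, since $f_{i+1}\cdots f_n(x) = 0$ forces $f_i f_{i+1}\cdots f_n(x) = 0$. Each $f_j$ is surjective, hence so is the composition $f_{i+1}\cdots f_n \colon M \to M_i$, which therefore induces an isomorphism $M/K_{i+1} \xrightarrow{\sim} M_i$. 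Under this isomorphism, the subquotient $K_i/K_{i+1}$ corresponds to $\Ker f_i \subseteq M_i$, since an element of $M$ lies in $K_i$ exactly when its image in $M_i$ lies in $\Ker f_i$. Hence $K_i/K_{i+1} \cong \Ker f_i \in \X$. Properness of each $f_i$ forces $\Ker f_i \neq 0$, so $K_{i+1}\subsetneq K_i$ is a strict inclusion, and the resulting chain $0\subset K_n \subset K_{n-1}\subset \cdots \subset K_1 = M$ is a genuine $\X$-filtration.

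The main, and really the only, obstacle is bookkeeping. The reverse indexing conventions for filtrations versus cofiltrations, combined with the iterated compositions $f_i f_{i+1}\cdots f_n$, make it easy to slip a subscript. But once the dictionary $K_i \longleftrightarrow M_i$ via the surjection $M \twoheadrightarrow M/K_{i+1} \cong M_i$ is fixed, each verification collapses to a single application of the third isomorphism theorem, and the claimed isomorphisms $\Ker f_i \cong M_i/M_{i-1}$ (resp.\ $K_i/K_{i+1}\cong \Ker f_i$) as well as the preservation of strictness fall out simultaneously.
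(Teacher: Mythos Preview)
Your argument is correct; the paper itself omits the proof entirely, stating only that ``its easy verification is left to the reader,'' and your third-isomorphism-theorem computation is exactly the intended routine check. Your side remark that the extension-closedness of $\C$ plays no role in the argument is also accurate.
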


We are now in position to demonstrate our main result.

\begin{theorem}\label{Thm 1.5} Let $A$ be an artinian algebra and let $\C$ be a nonzero extension-closed subcategory of $\mod A$.
\begin{enumerate}
\item[(a)] If $\C$ is closed under kernels of epimorphisms, then $\CQ$ is the smallest class of modules such that $\F(\CQ)=\C$.
\item[(b)] If $\C$ is closed under cokernels of monomorphisms, then $\CS$ is the smallest class of modules such that $\F(\CS)=\C$. 
\end{enumerate}
\end{theorem}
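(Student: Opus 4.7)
\textbf{Proof plan for Theorem \ref{Thm 1.5}.} The plan is to prove (a) in detail and then indicate that (b) is obtained by a symmetric argument, swapping the roles of Lemma~\ref{Lem 1.3}(a) and Lemma~\ref{Lem 1.3}(b) and exchanging ``kernels of epimorphisms'' with ``cokernels of monomorphisms.''  For (a), I will establish $\F(\CQ)=\C$ in two steps and then prove the minimality property.  The inclusion $\F(\CQ)\subseteq\C$ is a routine induction on filtration length: each step of an $\CQ$-filtration fits in a short exact sequence whose outer terms lie in $\CQ\subseteq\C$, and the extension-closedness of $\C$ propagates membership up the filtration.

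For the reverse inclusion $\C\subseteq\F(\CQ)$, I induct on $\ell(M)$ for $M$ a nonzero module of $\C$.  The base case $\ell(M)=1$ is immediate: a simple module in $\C$ has no proper nonzero submodules, hence lies automatically in $\CQ$, and $0\subset M$ is the required $\CQ$-filtration.  For the inductive step, if $M\in\CQ$ then the trivial filtration works; otherwise, the construction in the proof of Lemma~\ref{Lem 1.3}(b) furnishes a \emph{proper nonzero} submodule $L$ of $M$ with $M/L\in\CQ$.  The short exact sequence $0\to L\to M\to M/L\to 0$ then has its middle and right terms in $\C$, so closure under kernels of epimorphisms forces $L\in\C$.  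Since $\ell(L)<\ell(M)$, the induction hypothesis produces an $\CQ$-filtration of $L$, and appending $M$ at the top yields the desired $\CQ$-filtration of $M$.

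For minimality, suppose that $\X\subseteq\mod A$ is any class with $\F(\X)=\C$.  The one-step filtration $0\subset X$ shows $\X\subseteq\C$.  Now pick $M\in\CQ$ and any $\X$-filtration $0=M_0\subset M_1\subset\cdots\subset M_n=M$.  The top factor $M/M_{n-1}$ is a nonzero quotient of $M$ lying in $\X\subseteq\C$; were $M_{n-1}$ nonzero, this would be a \emph{proper} nonzero quotient of $M$ belonging to $\C$, contradicting $M\in\CQ$.  Hence $M_{n-1}=0$, forcing $n=1$ and $M=M_1\in\X$, so $\CQ\subseteq\X$.

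Part (b) is handled by the symmetric argument: from $M\in\C$, Lemma~\ref{Lem 1.3}(a) extracts a proper nonzero submodule $L\in\CS$, and closure under cokernels of monomorphisms places $M/L$ in $\C$; induction on length provides an $\CS$-filtration of $M/L$, and prepending $L$ yields one for $M$.  The minimality argument is dualized by examining the bottom factor $M_1\in\X$ of an $\X$-filtration of a given $M\in\CS$.  I do not anticipate a substantial obstacle: Lemma~\ref{Lem 1.3} already provides, at the relevant end of the filtration, a piece lying in $\CQ$ or $\CS$, and the closure hypothesis is precisely what keeps the remaining complement inside $\C$ so that the induction on length closes.  The only care required is to match the correct end (quotient for (a), submodule for (b)) with the correct closure hypothesis, so that the short exact sequence extracted from Lemma~\ref{Lem 1.3} has its unknown term recognized as an element of $\C$.
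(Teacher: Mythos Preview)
Your proof is correct and follows essentially the same line as the paper's: the inclusion $\F(\CQ)\subseteq\C$ is the same extension-closure induction, the reverse inclusion uses Lemma~\ref{Lem 1.3}(b) together with closure under kernels of epimorphisms (you phrase it as induction on length, the paper as a terminating descending chain---equivalent since modules over an artinian algebra have finite length), and the minimality argument is identical.

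The only noteworthy difference is in part~(b): the paper dualizes (a) to show that $\CS$ is the smallest class that \emph{cofilters} $\C$, and then invokes Proposition~\ref{Prop 1.7} to translate this into a statement about filtrations. Your argument instead builds an $\CS$-filtration of $M$ directly, by taking $L\in\CS$ as the bottom piece and lifting an $\CS$-filtration of $M/L$ through the quotient map. This avoids the detour through cofiltrations and is slightly more self-contained, though of course Proposition~\ref{Prop 1.7} makes the two viewpoints interchangeable.
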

\begin{proof}
(a). Let $M\in\F(\CQ)$. Then there exists a sequence of submodules  
\[
0=M_0\subset M_1 \subset \cdots \subset M_{n-1}\subset M_n=M
\]
of $M$ such that $M_i/M_{i-1}\in \CQ$ for all $i=1, 2, \dots, n$.  In particular, $M_1\in \CQ\subseteq \C$.  By induction, suppose that $M_{i-1}\in\C$ for some $i\geq 2$. The short exact sequence $0\rightarrow M_{i-1}\rightarrow M_i\rightarrow M_i/M_{i-1}\rightarrow 0$, together with the fact that $\C$ is closed under extensions gives $M_i\in\C$.    So $M\in\C$, and $\F({\CQ})\subseteq \C$.

Conversely, suppose that $M$ is a nonzero module in $\C$.  If $M\in\CQ$, then $M\in\F(\CQ)$. Else, it follows from (\ref{Lem 1.3})(b) that there exists a proper quotient $f_1:M\rightarrow M/M_1$, with $M/M_1\in \CQ$. Since $\C$ is closed under kernels of epimorphisms, we get $M_1\in \C$. If $M_1\in\CQ$, then $M\in\F(\CQ)$.  Else, by repeating this process, we obtain a sequence of proper nonzero submodules
\[
\cdots \subset M_3 \subset M_2 \subset M_1 \subset M
\]
that must stop since $A$ is artinian. So $\CQ$ filters $\C$.

To demonstrate the minimality of $\CQ$, suppose that $\mathcal{C}_0$ is a class of modules in $\C$ that filters $\C$.  Suppose that $X\in \CQ\setminus\mathcal{C}_0$.  Since $X\in\CQ\subseteq\F(\mathcal{C}_0)$, there exists a proper nonzero submodule $Y$ of $X$ such that $X/Y\in\mathcal{C}_0$.  But $\mathcal{C}_0\subseteq\C$ implies $X/Y\in\C$, a contradiction to $X\in\CQ$.  So $\CQ\subseteq\mathcal{C}_0$. 

(b). 
The dual argument, using (\ref{Lem 1.3})(a) instead of (\ref{Lem 1.3})(b), shows that $\CS$ is the smallest class of modules that cofilters $\C$. The conclusion then follows from (\ref{Prop 1.7}). 
\end{proof}

\subsection{Modules of finite projective dimension}\label{Modules of finite projective dimension}
A classical example of a subcategory of $\mod A$ that is closed under extensions, kernels of epimorphisms and cokernels of monomorphisms consists of 
\[
\P=\{M\in\mod A \ | \ \pd  M<\infty\},
\]
that is, the set of all finitely generated modules of finite projective dimension.  It then follows from (\ref{Rem 1.1(a)(b)}) that $\und{\P}=\ov{\P}$. Moreover, it follows from (\ref{Thm 1.5}) and (\ref{Prop 1.7}) that $\und{\P}=\ov{\P}$ is the smallest class of modules that filters $\P$ and the smallest class of modules that cofilters $\P$. 

Observe that if $A$ is of finite global dimension, then $\P=\mod A$, and thus $\und{\P}=\ov{\P}$ consists of all simple $A$-modules.  After this remark, one might expect that, given a subcategory $\C$ of $\mod A$, the classes ${\CS}$ and ${\CQ}$ share nice properties with the simple $A$-modules, for instance the Jordan-H\"older property or, more simply, its finiteness. Examples (b) and (c) below show that it is unfortunately not the case in general.

\begin{examples}\label{Ex 1}
In the following examples, the algebras $A$ are assumed to be $k$-algebras, where $k$ is an algebraically closed field.
\begin{enumerate}
\item[(a)] Let $A$ be the algebra given by the quiver 
$\SelectTips{eu}{10}\xymatrix@1{\bullet \ar@(ul, ur)[]^\alpha}$ bound by the relation $\alpha^2=0$. Then, up to isomorphisms, $A$ has only two indecomposable modules, namely a simple module $S$ and its minimal projective cover $P$.  Since the simple $S$ has infinite projective dimension, we have $\und{\P}=\{P\}$. 
\item[(b)] Let $A$ be the algebra given by the quiver 
\[
\SelectTips{eu}{10}\xymatrix{&&1\ar[dl]_-{\alpha}\ar[dr]^{\beta} \\
& 2 \ar[dl]_-{\gamma}\ar[dr]^{\delta} && 4 \ar[dl]^-\varepsilon \\
3\ar[dr]_-{\zeta} && 5\ar[dl]^-\eta \ar@(d, r)[]_-\theta \\
& 6 \ar@(dl, dr)[]_-\iota}
\] 
bound by the commutativity relations $\alpha\delta=\beta\varepsilon$ and $\gamma\zeta=\delta\eta$; and all compositions with loops: $\delta\theta=\varepsilon\theta=\theta^2=\theta\eta=\zeta\iota=\iota^2=\eta\iota=0$. One can verify that $\und{\P}$ contains seven elements, represented below by their Loewy series:
 \[
 \und{\P}=\left\{
\begin{array}{c}
	1\\4
\end{array}, 
\begin{array}{c}
	2\\5
\end{array},
\begin{array}{c}
	3\\6
\end{array},
\begin{array}{c}
	1\\2\\3
\end{array},
\begin{array}{c}
	4\\5\\6
\end{array},
\begin{array}{c}
	5\\5 \ 6
\end{array},
\begin{array}{c}
	6\\6
\end{array}
\right\}.
 \]
Observe that $\und{\P}$ contains more elements than the number of non-isomorphic simple $A$-modules, Moreover, the indecomposable projective module associated with the vertex $1$, namely 
$P_1=\begin{array}{c}
	\ \ \ 1 \\ 
	\ \ \; 2 \ \ 4 \\
	3 \ \ 5 \\
	\ 6
\end{array}$, admits the following $\und{\P}$-filtrations:
\[
0\subset \begin{array}{c}
	4 \\ 
	5 \\
	6
\end{array}\subset \begin{array}{c}
	\ \ \ 1 \\ 
	\ \ \; 2 \ \ 4 \\
	3 \ \ 5 \\
	\ 6
\end{array}
\qquad\text{and}\qquad
0\subset \begin{array}{c}
	3\\ 
	6
\end{array}\subset \begin{array}{c}
	\ \ 2 \\ 
	\ 3 \ \ \;5 \\
	\ \ 6 
\end{array}\subset \begin{array}{c}
	\ \ \ 1 \\ 
	\ \ \; 2 \ \ 4 \\
	3 \ \ 5 \\
	\ 6
\end{array}.
\]
It is worthwhile to observe here that, in addition to involve different elements of $\und{\P}$, both composition series have different length, by opposition to Jordan-H\"older's theorem for the simple modules.
\item[(c)] This example, due to Igusa, Smal{\o} and Todorov (see \cite{IST90}), shows that $\und{\P}$ can contain infinitely many elements.

Let $A$ be the algebra given by the quiver 
$\SelectTips{eu}{10}\xymatrix{1 \ar@/_1pc/[r]_-\gamma & 2 \ar@/_1pc/[l]_-\alpha \ar[l]_-\beta}$ bound by the relations $\gamma\alpha=\gamma\beta=\alpha\gamma=0$. Then both simple modules $S_1$ and $S_2$ have infinite projective dimensions, and there is an infinite family of indecomposable $A$-modules $\{Y_i\}$, of length two, of projective
dimension one, annihilated by $\gamma$, and indexed over the projective line of
the field $k$ (excluding one point) with a nonzero $A$-morphism to $S_2$. It is shown in \cite{IST90} that these maps do not factor through an $A$-module of finite projective dimension, proving that $\P$ is not contravariantly finite.  Since the only proper submodule of each of these modules $Y_i$ is the simple module $S_1$, these modules $Y_i$ are in $\und{\P}$, proving at the same time that  $\und{\P}$ is infinite and that $\und{\P}$ is not contravariantly finite in $\mod A$.

We refer the reader to \cite[Chapter 4]{Sto04} for a more in-depth investigation of the modules in $\und{\P}$ for this specific algebra.

\item[(d)] Let $A$ be the algebra given by the quiver 
$\SelectTips{eu}{10}\xymatrix{1\ar[r]^\alpha & 3 \ar@(ul,ur)[]^\beta & 2 \ar[l]_{\gamma}}$
bound by $\alpha\beta=0$ and $\beta^2=0$. The Auslander-Reiten quiver of $\mod A$ is given in the Figure 1 below,
\begin{figure}[!htn]
      \resizebox{.8\linewidth}{!}{\includegraphics{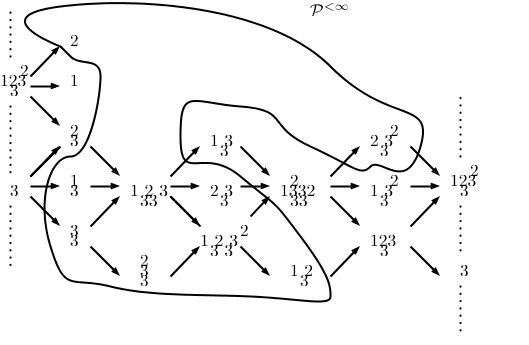}}
      \caption{}
\end{figure}
in which we depicted the modules in $\P$. It is then easy to verify that the minimal right $\P$-approximations of the simple modules are given by 
$\X=\left\{
\begin{array}{c}
	1 \ 2 \\3
\end{array},
2, 
\begin{array}{c}
	2 \ 3 \\3
\end{array}
\right\}$, 
while $\ov{\P}=\und{\P}=\left\{
\begin{array}{c}
	1 \\3
\end{array},
2,
\begin{array}{c}
	3\\3
\end{array}
\right\}$.
By inspection, it is easy to verify that every module in $\P$ is filtered by $\und{\P}$.  For instance, a $\und{\P}$-filtration of 
$M=\begin{array}{c}
	1 \ 2 \ 3\\3 \ 3
\end{array}$ is given by
\[
0\subset \begin{array}{c}
	1 \\ 
	3 
\end{array}\subset \begin{array}{c}
	1 \\
	3
\end{array}\bigoplus
\begin{array}{c}
	3 \\
	3
\end{array}
\subset
\begin{array}{c}
	1 \ 2 \ 3\\3 \ 3
\end{array},
\]
while
\[
\begin{array}{c}
	1 \ 2 \ 3\\3 \ 3
\end{array}
\longrightarrow 
\begin{array}{c}
	2 \ 3 \\
	3
\end{array}\longrightarrow
2\longrightarrow 0
\]
is a $\und{\P}$-cofiltration of $M$.

On the other hand, the result of Auslander and Reiten mentioned in the introduction asserts that $M$ is a direct summand of a $\X$-filtered module $\widetilde{M}$. Here, one has to take $\widetilde{M}=\begin{array}{c}
	1 \ 2 \ 3\\3 \ 3
\end{array}\bigoplus 
\begin{array}{c}
	 \  \ \ 2\\2 \ 3\\3
\end{array}$ and the (less intuitive) $\X$-filtration
\[
0\subset \begin{array}{c}
	2 \ 3 \\ 
	3 
\end{array}\subset \begin{array}{c}
	3 \\
	3
\end{array}\bigoplus
\begin{array}{c}
	 \  \ \ 2\\2 \ 3\\3
\end{array}
\subset
\begin{array}{c}
	1 \ 2 \ 3\\3 \ 3
\end{array}\bigoplus 
\begin{array}{c}
	 \  \ \ 2\\2 \ 3\\3
\end{array}
\]
and $ \X$-cofiltration 
\[
\begin{array}{c}
	1 \ 2 \ 3\\3 \ 3
\end{array}\bigoplus 
\begin{array}{c}
	 \  \ \ 2\\2 \ 3\\3
\end{array}
\longrightarrow
\begin{array}{c}
	1 \ 2 \\3 
\end{array}\bigoplus 
\begin{array}{c}
2 \ 3\\3
\end{array}
\longrightarrow
\begin{array}{c}
1 \ 2\\3
\end{array}
\longrightarrow 0.
\]
\end{enumerate}
\end{examples}

These examples show that $\und{\P}$ is sometimes difficult to track.  This set is however of great interest, as demonstrated by the two next results in connection with the finitistic dimension.  Recall that the \textbf{finitistic dimension} of an algebra $A$, denoted by $\findim A$,  is by definition the supremum of the projective dimensions of the modules in $\P$. It is still an open question whether $\findim A$ is always finite.

\begin{lemma}\label{Cor findim}
If $A$ is an artinian algebra, then $\findim A=\sup\{\pd  X \ | \ X\in\und{\P}\}$.
\end{lemma}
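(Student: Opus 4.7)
The plan is to prove the two inequalities separately, using Theorem \ref{Thm 1.5} for the non-trivial direction.

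First, the inequality $\sup\{\pd X \mid X\in\und{\P}\}\leq \findim A$ is immediate from the inclusion $\und{\P}\subseteq\P$ and the definition of $\findim A$.

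For the reverse inequality, I would set $d=\sup\{\pd X \mid X\in\und{\P}\}$ (allowing $d=\infty$) and show that every $M\in\P$ satisfies $\pd M\leq d$. Since $\P$ is extension-closed and closed under kernels of epimorphisms, Theorem \ref{Thm 1.5}(a) (together with $\und{\P}=\ov{\P}$ as noted in Subsection \ref{Modules of finite projective dimension}) tells us that every nonzero $M\in\P$ admits a $\und{\P}$-filtration
\[
0=M_0\subset M_1\subset\cdots\subset M_{n-1}\subset M_n=M
\]
with $M_i/M_{i-1}\in\und{\P}$ for each $i$. I would then proceed by induction on the length $n$ of this filtration, using the standard short exact sequence inequality: from $0\to M_{i-1}\to M_i\to M_i/M_{i-1}\to 0$ one has $\pd M_i\leq \max\{\pd M_{i-1},\pd(M_i/M_{i-1})\}$. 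The base case $n=1$ gives $M=M_1\in\und{\P}$ with $\pd M\leq d$, and the induction step yields $\pd M_n\leq \max\{\pd M_{n-1},d\}\leq d$.

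There is no real obstacle here: the entire content of the lemma is the fact that $\und{\P}$ filters $\P$, which has already been established. The remaining argument is a routine induction on filtration length using the elementary behaviour of projective dimension under extensions. The only mild subtlety worth mentioning is that the statement holds even when $\findim A=\infty$, since in that case the filtration argument forces $\sup\{\pd X\mid X\in\und{\P}\}=\infty$ as well.
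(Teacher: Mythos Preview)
Your proposal is correct and follows essentially the same approach as the paper: both directions are handled identically, with the easy inequality coming from $\und{\P}\subseteq\P$ and the other from taking a $\und{\P}$-filtration of $M\in\P$ and inducting on its length via $\pd M_i\leq\max\{\pd M_{i-1},\pd(M_i/M_{i-1})\}$. The paper's version is slightly terser and does not explicitly invoke Theorem~\ref{Thm 1.5}, but the content is the same.
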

\begin{proof}
Clearly, $\findim A\geq\sup\{\pd  X \ | \ X\in\und{\P}\}$.  On the other hand, if $M\in\P$, then there exists a $\und{\P}$-filtration
\[
0=M_0\subset M_1\subset M_2\subset \cdots\subset M_n=M.
\]
Then it follows by induction that 
\[
\pd  M\leq \sup\{\pd  M_0, \pd  M_1/M_0, \dots, \pd  M_n/M_{n-1}\}\leq \sup\{\pd  X \ | \ X\in\und{\P}\},
\]
so that $\findim A\leq\sup\{\pd  X \ | \ X\in\und{\P}\}$.
\end{proof}

As mentioned in the introduction of the paper, the idea of the following proof is due to D. Happel; compare with \cite[Section 2.3]{Hap}. 

\begin{theorem}\label{Thm Happel}
Let $A$ be a finite-dimensional algebra over a field $k$. If the length of every indecomposable module in $\und{\P}$ is bounded, then $\findim A<\infty$.
\end{theorem}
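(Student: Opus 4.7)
The plan is to combine Lemma \ref{Cor findim}, which reduces the finiteness of $\findim A$ to the boundedness of projective dimensions of modules in $\und{\P}$, with a result of Jensen and Lenzing that controls projective dimensions in terms of length. First I would use Proposition \ref{Rem 1.1 (c)} to promote the bounded-length hypothesis from indecomposables to all of $\und{\P}$: the complement $(\mod A)\setminus\P$ is closed under direct sums, since $\pd(M_1\oplus M_2)=\max(\pd M_1,\pd M_2)$ is infinite as soon as one summand has infinite projective dimension. By that proposition, every element of $\und{\P}$ is indecomposable, and therefore the hypothesis gives a uniform bound $N$ on $\ell(X)$ for all $X\in\und{\P}$, not merely for the indecomposable ones.

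Next I would invoke the theorem of Jensen-Lenzing from \cite{JL89}: for a finite-dimensional $k$-algebra $A$, the projective dimension function is bounded on the class of $A$-modules whose $k$-dimension is bounded. Since every composition factor of an $A$-module has $k$-dimension at most $\dim_k A$, a length bound of $N$ on $\und{\P}$ translates into a $k$-dimension bound of $N\cdot\dim_k A$, and the Jensen-Lenzing result then produces a finite upper bound $d$ on $\pd X$ for $X\in\und{\P}$.

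Finally, applying Lemma \ref{Cor findim} closes the argument:
\[
\findim A \;=\; \sup\{\pd X \mid X\in \und{\P}\} \;\leq\; d \;<\; \infty.
\]

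The only real step is locating and invoking the right form of the Jensen-Lenzing statement: once one has in hand that modules of bounded $k$-dimension over a finite-dimensional $k$-algebra have bounded projective dimension, the rest of the proof is routine, combining this with the indecomposability observation from Proposition \ref{Rem 1.1 (c)} and the reduction of Lemma \ref{Cor findim}. The finite-dimensional-over-a-field hypothesis enters precisely here, via Jensen-Lenzing; it is what prevents the argument from working for an arbitrary artinian algebra, and it is presumably the ingredient missing from Happel's original unpublished account.
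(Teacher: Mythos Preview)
Your argument is correct and follows the same route as the paper: reduce via Lemma~\ref{Cor findim}, use Proposition~\ref{Rem 1.1 (c)} to pass from indecomposables to all of $\und{\P}$, and then invoke Jensen--Lenzing to bound projective dimensions of modules of bounded length. The only presentational difference is that the paper unpacks the Jensen--Lenzing step rather than citing it as a black box: it works in the module variety $\mathcal{M}_r(A)$, uses \cite[(Theorem 12.61)]{JL89} to get that $\mathcal{P}_t=\{X\in\mathcal{M}_r(A):\pd X\leq t\}$ is Zariski-open, and then uses noetherianity of $\mathcal{M}_r(A)$ to force the chain $\mathcal{P}_0\subseteq\mathcal{P}_1\subseteq\cdots$ to stabilize at some $t_r$; your ``bounded $k$-dimension implies bounded projective dimension'' is exactly the conclusion of that argument.
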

\begin{proof}
By (\ref{Cor findim}), it is enough to show that the projective dimension of modules in $\und{\P}$ is bounded.  For a given integer $r$ we consider the algebraic variety $\mathcal{M}_r(A)$ of $A$-modules of length $r$.  Consider the subset $\mathcal{P}_t$ of $\mathcal{M}_r(A)$ formed by those modules $X$ which satisfy $\pd  X \leq t$.  By \cite[(Theorem 12.61)]{JL89}, it follows that $\mathcal{P}_t$ is an open subset of $\mathcal{M}_r(A)$ in the Zariski topology.  So we obtain an ascending chain of open subsets $\mathcal{P}_0\subseteq \mathcal{P}_1 \subseteq \cdots$ which has to become stationary since $\mathcal{M}_r(A)$ is finite-dimensional.$\mathcal{P}_t$ is an open subset of $\mathcal{M}_r(A)$  So there exists $t_r$ with $\pd X\leq t_r$ for all $X\in \mathcal{M}_r(A)$.  

By assumption, there is an integer $m$ such that the length of indecomposable modules in $\und{\P}$ is bounded by $m$.  Hence $\findim A=\sup\{\pd  X \ | \ X\in\und{\P}\}\leq \max\{t_r \ | \ 1\leq r\leq m\}$, which shows the assertion.
\end{proof}

\begin{remark}\label{Prop 3.13}
We observed above that the Jordan-H\"older property is not satisfied in general for $\F({\CS})$ and $\F({\CQ})$.  However, it is an easy exercise to verify that if ${\CS}={\CQ}$ and this set consists only of simple $A$-modules, then:
\begin{enumerate}
\item[(a)] The classical proof of the Jordan-H\"older Theorem holds true on $\F({\CS})=\F({\CQ})$. 
\item[(b)] If \[
0=M_0\subset M_1 \subset \cdots \subset M_{n-1}\subset M_n=M
\]
is a ${\CQ}$-filtration of $M$ and $L$ is a submodule of $M$, then 
\begin{enumerate}
\item[(i)] $0=L\cap M_0\subset L\cap M_1 \subset \cdots \subset L\cap M_{n-1}\subset L\cap M_n=L$ is such that every consecutive quotient is zero or in ${\CQ}$.
\item[(ii)] $0=\frac{L+M_0}{L}\subset \frac{L+M_1}{L} \subset \cdots \subset \frac{L+M_{n-1}}{L}\subset \frac{L+M_n}{L}=\frac{M}{L}$ is such that every consecutive quotient is zero or in ${\CQ}$.
\end{enumerate}
\end{enumerate}
\end{remark}

Observe that the hypotheses of the above remark hold true when, for instance, $\C$ is closed under submodules and quotients, see (\ref{Lem 3.11}).
The following proposition is useful to determine when an extension-closed subcategory is closed under submodules and quotients.

\begin{proposition}\label{Prop 3.10}
Let $\C$ be a nonzero subcategory of $\mod A$ that contains the zero module and is closed under extensions.
\begin{enumerate}
\item[(a)] If $\C$ is closed under submodules, then $\C$ is closed under quotients if and only if $\CQ\subseteq \CS$. Moreover, in this case, we have $\CQ=\CS$.
\item[(b)] If $\C$ is closed under quotients, then $\C$ is closed under submodules if and only if $\CS\subseteq \CQ$. Moreover, in this case, we have $\CQ=\CS$.
\end{enumerate}
\end{proposition}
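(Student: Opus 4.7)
The proof plan combines Lemma~\ref{Lem 3.11}, which identifies $\CS$ (respectively $\CQ$) with the set of simple modules in $\C$ when $\C$ is closed under submodules (respectively quotients), with Theorem~\ref{Thm 1.5}, which guarantees that $\CQ$ (respectively $\CS$) filters $\C$. The key idea is to convert the filtration furnished by Theorem~\ref{Thm 1.5} into a composition series of each $M \in \C$ all of whose factors lie in $\C$, and then to invoke the Jordan-H\"older theorem to spread this property to every submodule or quotient of $M$.

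For (a), the forward direction is immediate: if $\C$ is closed under both submodules and quotients, Lemma~\ref{Lem 3.11} gives $\CS = \{\text{simples in }\C\} = \CQ$, and in particular $\CQ \subseteq \CS$. For the reverse direction I would assume $\CQ \subseteq \CS$ together with closure under submodules. Then Lemma~\ref{Lem 3.11}(a) gives $\CS = \{\text{simples in }\C\}$, so every element of $\CQ$ is a simple module in $\C$. Since closure under submodules entails closure under kernels of epimorphisms, Theorem~\ref{Thm 1.5}(a) applies and shows that every $M\in\C$ admits a $\CQ$-filtration, which is nothing but a composition series of $M$ whose factors are all simples in $\C$. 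By the Jordan-H\"older theorem applied in $\mod A$, every composition factor of $M$ lies in $\C$; and the composition factors of any quotient $M/N$ form a sub-multiset of those of $M$, hence also lie in $\C$. Rebuilding $M/N$ along its composition series and using closure of $\C$ under extensions (and $0\in\C$) yields $M/N\in\C$. This proves closure under quotients, and once this is in hand Lemma~\ref{Lem 3.11} forces $\CQ = \CS = \{\text{simples in }\C\}$, settling the moreover clause.

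Part (b) is proved by the exactly dual argument: closure under quotients implies closure under cokernels of monomorphisms, Lemma~\ref{Lem 3.11}(b) forces the elements of $\CS$ to be simple, and Theorem~\ref{Thm 1.5}(b) together with Jordan-H\"older shows that every submodule of each $M\in\C$ lies in $\C$. The only slightly delicate step is the Jordan-H\"older argument itself, where one must observe that the property ``every composition factor lies in $\C$'', once established for $M$, is inherited by both sub- and quotient-modules; everything else is either notational, matching the definitions of $\CS$ and $\CQ$, or a direct application of Lemma~\ref{Lem 3.11} and Theorem~\ref{Thm 1.5}.
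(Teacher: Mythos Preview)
Your proof is correct and follows essentially the same route as the paper. Both arguments use Lemma~\ref{Lem 3.11} for the forward direction, then for the converse identify $\CQ$ with the simples in $\C$, invoke Theorem~\ref{Thm 1.5}(a) to obtain a composition series of each $M\in\C$ with factors in $\C$, and appeal to Jordan--H\"older to conclude that every composition factor of any quotient $M/N$ lies in $\C$, whence $M/N\in\F(\{\text{simples in }\C\})=\C$ by extension-closure. The only cosmetic difference is that the paper invokes Lemma~\ref{Rem 1.1(a)(b)} at the outset to upgrade $\CQ\subseteq\CS$ to $\CQ=\CS$, whereas you defer this equality to the end; the logical content is identical.
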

\begin{proof}
We only prove (a) since the proof of (b) is dual. The necessity directly follows from (\ref{Lem 3.11}). For the sufficiency, if $\CQ\subseteq \CS$, then $\CQ=\CS$ by (\ref{Rem 1.1(a)(b)}). Then (\ref{Lem 3.11}) implies that $\CQ=\CS=\{S \ | \ S \text{ is a simple module in } \C\}$ and $\C=\F(\{S \ | \ S \text{ is a simple module in } \C\})$ since $\C=\F(\CQ)$ by (\ref{Thm 1.5}). Now, let $M\in\C$ and $f:M\rightarrow N$ be an epimorphism.  If $N=0$, then $N\in\C$ by assumption. Otherwise, every simple $A$-module that appears in any composition series of $N$ also appears in any composition series of $M$, showing that $N\in \F(\{S \ | \ S \text{ is a simple module in } \C\}\subseteq\C$.
\end{proof}

\subsection{Auslander-Reiten Quiver}

Let $A$ be an artin algebra and $\C$ be a subcategory of $\mod A$. In this section, we gather some observations about the positioning of the elements of ${\CS}$ and ${\CQ}$ in the Auslander-Reiten quiver of $\mod A$.  Recall from (\ref{Rem 1.1 (c)}) that the elements of ${\CS}$ and ${\CQ}$ often consist of indecomposable $A$-modules, and so can be seen as vertices in the Auslander-Reiten quiver of $\mod A$. For further details on the Auslander-Reiten quiver and on its connected components, we refer the reader to \cite{ASS06, SS07}, for instance.

\begin{proposition}\label{Prop ASS}
Let $\C$ be a nonzero subcategory of $\mod A$ that is closed under extensions and direct summands. 
Suppose that
\[
\SelectTips{eu}{10}\xymatrix{0\ar[r]& L\ar[r]^-{f=(f_i)} & M=\displaystyle\bigoplus_{i=1}^n M_i \ar[r]^-{g=(g_i)} & N \ar[r] & 0}
\]
is an almost split sequence in $\mod A$, with $L\in{\CQ}$ and $N\in{\CS}$. Then $M$ is indecomposable and $M\notin{\CS}\cup{\CQ}$.
\end{proposition}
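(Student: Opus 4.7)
The plan is to dispatch the two claims separately. First, $M \notin \CS \cup \CQ$ follows essentially from the definitions: the almost split sequence exhibits $L$ as a proper nonzero submodule of $M$, with quotient $M/L \cong N$, and $M$ itself lies in $\C$ by extension-closure. Since $L \in \CQ \subseteq \C$ is a proper nonzero submodule of $M$ that belongs to $\C$, the defining property of $\CS$ fails and $M \notin \CS$. Dually, $M/L \cong N \in \CS \subseteq \C$ is a proper nonzero quotient of $M$ in $\C$, so $M \notin \CQ$.

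For indecomposability, I argue by contradiction. By Krull--Schmidt I may write $M = M_1 \oplus M'$ with $M_1$ indecomposable and $M' \neq 0$; let $g_1 : M_1 \to N$ and $g' : M' \to N$ denote the corresponding components of $g$. Since $g_1$ is a component of an almost split sequence, it is irreducible, and because $M_1$ and $N$ are both indecomposable, $g_1$ is necessarily either a proper monomorphism or a proper epimorphism. I eliminate the monomorphism case first: a proper monomorphism $g_1$ would realize $M_1$ as a proper nonzero submodule of $N$, and $M_1 \in \C$ by direct-summand closure would then contradict $N \in \CS$. Hence $g_1$ is a non-split epimorphism, and in particular $\Ker g_1 \neq 0$.

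It remains to derive a contradiction from $M' \neq 0$. I restrict the canonical projection $\pi : M \to M'$ to $L$; its kernel is $L \cap M_1 = \Ker g_1$. This restriction is surjective, since for any $x' \in M'$ the surjectivity of $g_1$ produces $x_1 \in M_1$ with $g_1(x_1) = -g'(x')$, whence $(x_1, x') \in L$ projects to $x'$. Therefore $L/\Ker g_1 \cong M'$. Since $\Ker g_1$ is nonzero and proper in $L$ (the latter because $M' \neq 0$), the hypothesis $L \in \CQ$ forces $M' \notin \C$; but $M'$ is a direct summand of $M \in \C$, hence itself in $\C$, the desired contradiction.

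The principal difficulty is locating the isomorphism $L/\Ker g_1 \cong M'$, which is what converts a putative nontrivial decomposition of $M$ into a forbidden quotient of $L$. The dual hypotheses $L \in \CQ$ and $N \in \CS$ interlock in an essential way: the $\CS$-condition on $N$ is what compels $g_1$ to be an epimorphism (so that $\Ker g_1 \neq 0$ and the surjectivity of the restricted projection both hold), while the $\CQ$-condition on $L$ is precisely what gets contradicted at the final step.
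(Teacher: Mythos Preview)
Your proof is correct, but takes a genuinely different route from the paper's for the indecomposability claim. The paper argues symmetrically on both sides of the sequence: each component $f_i: L \to M_i$ is irreducible, and since $L \in \CQ$ (with $M_i \in \C$ by summand-closure) it cannot be a proper epimorphism, hence is a proper monomorphism; dually each $g_i$ is a proper epimorphism because $N \in \CS$. This yields $\ell(L) < \ell(M_i)$ and $\ell(N) < \ell(M_i)$ for every $i$, so if $n \geq 2$ one gets $\ell(L) + \ell(N) < \sum_i \ell(M_i) = \ell(M)$, contradicting exactness.

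Your argument is asymmetric: you use $N \in \CS$ only to force $g_1$ to be an epimorphism, and then the isomorphism $L/\Ker g_1 \cong M'$ converts the hypothetical complementary summand $M'$ into a proper nonzero quotient of $L$ lying in $\C$, contradicting $L \in \CQ$ directly. The paper's length-counting is slicker and treats the two hypotheses on an equal footing; your approach avoids any numerics and shows more transparently \emph{how} the definition of $\CQ$ obstructs a decomposition, at the cost of a small computation to identify $M'$ as a quotient of $L$. Both are perfectly valid; the argument for $M \notin \CS \cup \CQ$ is essentially the same in both.
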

\begin{proof}
Since $\C$ is closed under extensions and direct summands, we have $M\in\C$ and $M_i\in\C$ for all $i=1, 2, \dots, n$.  Now each $f_i$ is an irreducible morphisms.  In particular, each $f_i$ is a nonisomorphism that is either an epimorphism or a monomorphism. But because $L\in{\CQ}$, the $f_i's$ cannot be epimorphisms, and thus are (strict) monomorphisms. Dually, because $N\in{\CS}$, each $g_i$ is a (strict) epimorphism.  So 
$\ell(L)<\ell(M_i)$ and $\ell(N)<\ell(M_i)$ for all $i$.  Consequently, if $n\geq 2$, we get
$\ell(L)+\ell(N) < \sum_{i=1}^n\ell(M_i)$, a contradiction.  Thus $n=1$ and $M$ is indecomposable.  In addition, $M\notin{\CS}\cup{\CQ}$ since $f$ is a monomorphism and $g$ an epimorphism.
\end{proof}

\begin{proposition}\label{Prop connected components}
Let $\C$ be a nonzero subcategory of $\mod A$, and let $\Gamma$ be a connected component of the Auslander-Reiten quiver of $\mod A$.  
\begin{enumerate}
\item[(a)] If $\Gamma$ is postprojective and $\Gamma\subseteq \C$, then every module in $\Gamma\cap{\CS}$ is simple.
\item[(b)] If $\Gamma$ is preinjective and $\Gamma\subseteq \C$, then every module in $\Gamma\cap{\CQ}$ is simple.
\item[(c)] If $\Gamma$ is a regular tube of rank $n$, then it contains at most $n$ elements in ${\CS}$, and at most $n$ elements in ${\CQ}$.
\end{enumerate}
\end{proposition}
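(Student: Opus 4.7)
The plan is to treat each part by a different structural argument adapted to the component $\Gamma$.

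For (c), I would exploit the ray/coray structure of the tube: the indecomposables of a regular tube of rank $n$ sit on $n$ rays, indexed by the $n$ quasi-simple modules at the mouth, and within a ray the modules form an ascending chain of inclusions $E_i\subset E_i^{[2]}\subset E_i^{[3]}\subset \cdots$. Consequently, two distinct modules on the same ray are comparable under inclusion, so if both lie in $\C$ the longer one cannot belong to $\CS$. Each of the $n$ rays therefore contributes at most one element to $\Gamma\cap \CS$, which gives the bound. The bound on $\Gamma\cap \CQ$ follows by the dual argument applied to the $n$ corays, where modules form descending chains of proper epimorphisms.

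For (a), I would assume $M\in \Gamma\cap \CS$ is non-simple and aim to produce a proper nonzero submodule of $M$ in $\C$ for a contradiction. Since $\Gamma$ is postprojective, $M\cong \tau^{-k}P$ for some indecomposable projective $P\in \Gamma$ and some $k\geq 0$. If $k=0$, then $M$ is projective and $\rad M\neq 0$; its indecomposable summands are precisely the sources of the irreducible morphisms into $M$ in the AR quiver, so they all lie in $\Gamma\subseteq \C$, and additivity of $\C$ places $\rad M$ itself in $\C$, a contradiction. If $k\geq 1$, I would inspect the almost split sequence $0\to \tau M\to \bigoplus_{j=1}^n N_j\to M\to 0$: each $N_j$ belongs to $\Gamma\subseteq \C$, and each component $g_j\colon N_j\to M$ is an irreducible morphism between indecomposables, hence either a proper monomorphism or a proper epimorphism. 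If some $g_j$ is a proper monomorphism then $N_j$ is realised as a proper nonzero submodule of $M$ in $\C$, finishing the argument.

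The main obstacle is to exclude the residual case in which every $g_j$ is a proper epimorphism; ruling this out truly requires the postprojective nature of $\Gamma$, since the analogous configuration in a regular tube or in a preinjective component is perfectly admissible. The strategy I have in mind is to iterate along the finite $\tau$-orbit $M,\tau M,\ldots,\tau^k M=P$, passing each time to the preceding almost split sequence and ultimately forcing the argument back into the projective case already handled; this reduction is the step that cannot be handled purely formally and is where I expect the bulk of the work. Part (b) is then proved by the dual construction: for a non-simple $M\in \Gamma\cap \CQ$ one uses $M/\soc M$ in the injective case (its summands are successors of $M$ in the AR quiver, so again in $\Gamma\subseteq \C$) and the almost split sequence $0\to M\to \bigoplus_j N_j\to \tau^{-1}M\to 0$ in the non-injective case, seeking a proper epimorphism $M\to N_j$ to realise a proper nonzero quotient of $M$ in $\C$.
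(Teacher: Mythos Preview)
Your argument for (c) is essentially the paper's: use the $n$ rays for $\CS$ and the $n$ corays for $\CQ$, and observe that two modules on the same ray (coray) are related by a proper mono (epi), so at most one of them can lie in $\CS$ (in $\CQ$).

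For (a) and (b), however, the paper takes a much shorter route. The key structural fact is that a postprojective component is \emph{closed under submodules} (dually, a preinjective component is closed under quotients). With this in hand the proof of (a) is one line: if $M\in\Gamma\cap\CS$ is not simple, take any simple submodule $S\subset M$; then $S\in\Gamma\subseteq\C$, contradicting $M\in\CS$.

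Your approach via almost split sequences is more circuitous and, as you yourself note, leaves open the case where every component $g_j\colon N_j\to M$ is a proper epimorphism. Your proposed fix---iterate along the finite $\tau$-orbit back to a projective---is not actually carried out, and it is not clear what invariant you would track through the iteration to force a contradiction. (Also, in the projective case you do not need additivity of $\C$: a single indecomposable summand of $\rad M$ already lies in $\Gamma\subseteq\C$ and is a proper nonzero submodule of $M$.) In effect, the inductive/iterative statement you are groping toward is precisely the closure-under-submodules property of postprojective components; once you know that fact, the almost split sequence machinery is unnecessary.
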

\begin{proof}
(a). Suppose that $M\in \Gamma\cap{\CS}$.  If $M$ is not simple, it has a simple submodule $S$.  Since $\Gamma$ is a postprojective component, it is closed under submodules, so $S\in \Gamma\subseteq \C$, a contradiction to $M\in{\CS}$.  So $M$ is simple. The proof of (b) is dual.

(c). This follows from the fact that given any $n+1$ modules $M_1, M_2, \dots, M_{n+1}$ in a regular tube of rank $n$, there are at least two modules $M_i$ and $M_j$ in this list lying in the ray starting at a quasi-simple module $M$.  Thus there is a monomorphism $M_i\longrightarrow M_j$ or $M_j\longrightarrow M_i$, which is a contradiction if these two modules are in ${\CS}$.  Dually, there are at least two modules $M_k$ and $M_l$ in this list lying in the coray ending at a quasi-simple module $N$.  Thus there is an epimorphism $M_k\longrightarrow M_l$ or $M_l\longrightarrow M_k$, which is a contradiction if these two modules are in ${\CQ}$.
\end{proof}

\section{Inclusions of subcategories}

The purpose of this section is to study the behavior of specific inclusions of subcategories of $\mod A$ in order to obtain, in particular, equivalent conditions to the finiteness of the finitistic dimension of $A$. As in Section 1, $A$ is an artinian algebra.

Suppose that we have a sequence of inclusions of subcategories
\[
\{0\}\neq \C_0\subseteq \C_1\subseteq \C_2 \subseteq \cdots\subseteq  \mod A.
\]
We will consider the following sets of properties: if 
\[
\SelectTips{eu}{10}\xymatrix{0\ar[r]&L\ar[r]&M\ar[r]&N\ar[r] &0}
\]
 is a short exact sequence in $\mod A$, then
\begin{enumerate}
\item[(G1)] 
\begin{enumerate}
\item[(a)]  $L\in\C_l$ and $M\in \C_m$ implies $N\in \C_{\max\{l+1, m\}}$
\item[(b)]  $M\in\C_m$ and $N\in \C_n$ implies $L\in \C_{\max\{m, n-1\}}$
\end{enumerate}
\item[(G2)]
\begin{enumerate}
\item[(a)]  $L\in\C_l$ and $M\in \C_m$ implies $N\in \C_{\max\{l-1, m\}}$
\item[(b)]  $M\in\C_m$ and $N\in \C_n$ implies $L\in \C_{\max\{m, n+1\}}$
\end{enumerate}
\item[(G3)] $L\in\C_l$ and $N\in \C_n$ implies $M\in \C_{\max\{l, n\}}$
\end{enumerate}

\begin{lemma}\label{Dans la preuve du Cor 2.3} Let $\{0\}\neq \C_0\subseteq \C_1\subseteq \C_2 \subseteq \cdots\subseteq  \mod A$ be inclusions of subcategories of $\mod A$.
\begin{enumerate}
\item[(a)] If (G1) holds true, then $\C_n$ is closed under kernels of epimorphisms for all $n$. In particular, $\und{\C_n}\subseteq \ov{\C_n}$ for all $n\geq 0$.
\item[(b)] If (G2) holds true, then $\C_n$ is closed under cokernels of monomorphisms for all $n$. In particular, $\ov{\C_n}\subseteq \und{\C_n}$ for all $n\geq 0$.
\end{enumerate}
\end{lemma}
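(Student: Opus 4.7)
The proof should be a direct specialization of the hypotheses, and I expect no real obstacle beyond keeping the indices straight.

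For part (a), fix $n\geq 0$ and take a short exact sequence
\[
\SelectTips{eu}{10}\xymatrix{0\ar[r]& L\ar[r] & M\ar[r] & K\ar[r] & 0}
\]
with $M,K\in\C_n$. The goal is to show $L\in\C_n$. I would apply property (G1)(b) with $M\in\C_n$ (so $m=n$) and $K\in\C_n$ (so the index playing the role of $n$ in the statement of (G1)(b) is also our $n$). The conclusion of (G1)(b) then reads $L\in\C_{\max\{n,\,n-1\}}=\C_n$, which is exactly what we want. This shows $\C_n$ is closed under kernels of epimorphisms, and the "in particular" statement $\und{\C_n}\subseteq\ov{\C_n}$ then follows immediately from Lemma~\ref{Rem 1.1(a)(b)}(a) applied to $\C=\C_n$.

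For part (b), the argument is dual. Fix $n\geq 0$ and take a short exact sequence $0\to L\to M\to K\to 0$ with $L,M\in\C_n$. Applying property (G2)(a) with $l=n$ and $m=n$ gives $K\in\C_{\max\{n-1,\,n\}}=\C_n$, so $\C_n$ is closed under cokernels of monomorphisms. The inclusion $\ov{\C_n}\subseteq\und{\C_n}$ is then immediate from Lemma~\ref{Rem 1.1(a)(b)}(b).

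The only point that deserves a moment's attention is the renaming conflict: the symbol $n$ appears both as the fixed index in the chain $\C_0\subseteq\C_1\subseteq\cdots$ and as a variable index in the statements of (G1) and (G2). I would make this explicit by introducing new letters (for instance using $m=n$ in (G1)(b) and letting the index of the quotient term be $n$ as well, with the two roles of $n$ coinciding by choice). Apart from this minor bookkeeping, both parts reduce to a single application of the hypothesis followed by a citation of Lemma~\ref{Rem 1.1(a)(b)}.
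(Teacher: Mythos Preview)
Your proof is correct and follows essentially the same approach as the paper: apply (G1)(b) (respectively (G2)(a)) with both indices equal to $n$ to conclude closure under kernels of epimorphisms (respectively cokernels of monomorphisms), then invoke Lemma~\ref{Rem 1.1(a)(b)}. The paper's version is simply terser, omitting the explicit computation $\max\{n,n-1\}=n$.
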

\begin{proof}
We only prove (a) since the proof of (b) is dual. Let $f:M\rightarrow N$ be an epimorphism, with $M,N\in\C_n$.  By (G1)(b), we get $\Ker f\in \C_n$. The second part follows from (\ref{Rem 1.1(a)(b)}).
\end{proof}

\begin{lemma}\label{Thm 2.5} Let $\{0\}\neq \C_0\subseteq \C_1\subseteq \C_2 \subseteq \cdots\subseteq  \mod A$ be inclusions of subcategories of $\mod A$. Suppose that (G3) holds true, and that (G1) or (G2) holds true. Let $m,n\in \{1, 2, \dots\}$ with $m\geq n$. The following conditions are equivalent:
\begin{enumerate}
\item[(a)] $\C_n=\C_m$,
\item[(b)] $\ov{\C_m}\subseteq \C_n$,
\item[(c)] $\und{\C_m}\subseteq \C_n$.
\end{enumerate}
\end{lemma}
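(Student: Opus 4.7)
The plan is to dispose of (a)$\Rightarrow$(b) and (a)$\Rightarrow$(c) at once, since $\ov{\C_m},\und{\C_m}\subseteq\C_m$, and to focus on the converses (b)$\Rightarrow$(a) and (c)$\Rightarrow$(a). Because $\C_n\subseteq\C_m$ is built into the hypothesis, it is enough to prove $\C_m\subseteq\C_n$. The case $m=n$ is vacuous, so I assume $m>n$; this is the only place the hypothesis $m\geq n$ matters, and it guarantees $\max\{m,n-1\}=\max\{m,n+1\}=m$, which is what will make the induction below land back in $\C_m$.

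The main argument is an induction on $\ell(M)$ for $M\in\C_m$. For (b)$\Rightarrow$(a), I apply Lemma \ref{Lem 1.3}(b) to obtain a submodule $L\subseteq M$ with $M/L\in\ov{\C_m}\subseteq\C_n$. If $L=0$ there is nothing to prove; otherwise, depending on whether (G1) or (G2) is in force, clause (b) of that condition applied to the exact sequence $0\to L\to M\to M/L\to 0$ yields $L\in\C_m$ (because $\max\{m,n\pm 1\}=m$), so by the inductive hypothesis $L\in\C_n$, and then (G3) applied to the same sequence gives $M\in\C_n$. For (c)$\Rightarrow$(a), the argument is entirely dual: Lemma \ref{Lem 1.3}(a) supplies $L\subseteq M$ with $L\in\und{\C_m}\subseteq\C_n$; if $L=M$ we are done, and otherwise clause (a) of (G1) or (G2) gives $M/L\in\C_m$, the induction yields $M/L\in\C_n$, and (G3) concludes.

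The one genuine subtlety is the index bookkeeping in (G1)–(G3): one must check that in each of the four sub-cases the index really collapses to $m$, which is exactly where the strict inequality $m>n$ is spent. A cleaner alternative, available in two of the four sub-cases, is to invoke Theorem \ref{Thm 1.5} directly: under (G1), Lemma \ref{Dans la preuve du Cor 2.3}(a) makes $\C_m$ closed under kernels of epimorphisms, so Theorem \ref{Thm 1.5}(a) supplies an $\ov{\C_m}$-filtration of $M$, and iterating (G3) along the filtration delivers (b)$\Rightarrow$(a) without any length induction; the symmetric remark with Theorem \ref{Thm 1.5}(b) handles (c)$\Rightarrow$(a) under (G2). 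The uniform induction above is preferred only because it treats all four sub-cases at once, and I expect this cross-checking of indices—rather than any deep issue—to be the only real obstacle.
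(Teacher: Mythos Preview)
Your proof is correct. The paper organizes the equivalence differently: under (G1) it proves the cycle (a)$\Rightarrow$(b)$\Rightarrow$(c)$\Rightarrow$(a), using Lemma~\ref{Dans la preuve du Cor 2.3} (namely $\und{\C_m}\subseteq\ov{\C_m}$) for the middle implication, and then argues (c)$\Rightarrow$(a) by contradiction---starting from $M\in\C_m\setminus\C_n$, peeling off an $\und{\C_m}$-submodule via Lemma~\ref{Lem 1.3}(a), and iterating to produce an infinite strictly ascending chain of kernels, violating noetherianity. The (G2) case is declared dual. Your version instead proves (b)$\Rightarrow$(a) and (c)$\Rightarrow$(a) independently by induction on $\ell(M)$, and you treat all four sub-cases ((G1)/(G2) $\times$ (b)/(c)) uniformly via the index check $\max\{m,n\pm1\}=m$. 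The underlying mechanism is the same---Lemma~\ref{Lem 1.3} plus finite length plus (G3)---but your packaging is more symmetric and avoids invoking Lemma~\ref{Dans la preuve du Cor 2.3}, at the cost of doing two inductions rather than one descent. The paper's cycle has the minor economy of proving only one hard direction per case; your approach has the advantage that it does not need to swap the cycle order when passing from (G1) to (G2).
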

\begin{proof}
We suppose that (G1) holds true.  The proof when (G2) holds true is similar. It is clear that (a) implies (b), while (b) implies (c) by (\ref{Dans la preuve du Cor 2.3}). To complete the proof, it suffices to show that $\C_m\subseteq \C_n$ when $\und{\C_m}\subseteq \C_n$. Suppose that $M\in\C_m\setminus\C_n$.  By (\ref{Lem 1.3})(a), there exists a (nonzero) submodule $L_0$ of $M$ such that $L_0\in\und{\C_m}$.  Since $\und{\C_m}\subseteq \C_n$ by assumption, we get $L_0\neq M$. Moreover, because $m> n$, it follows from (G1)(a) that $M/L_0\in \C_m$. Set $M_1=M/L_0$.  Observe that $M_1\notin \C_n$, since otherwise $M\in\C_n$ by (G3), a contradiction. So $M_1\in\C_m\setminus \C_n$. Repeating this process gives an infinite sequence of non-invertible epimorphisms
\[
\SelectTips{eu}{10}\xymatrix{M\ar[r]^{f_1}&M_{1} \ar[r]^{f_2} & M_2  \ar[r]^{f_3}& \cdots},
\]
that is, an infinite sequence of strict inclusions 
\[
\Ker(f_1) \subset \Ker(f_{2}f_1) \subset \Ker(f_3f_{2}f_1) \subset \cdots\subset M,
\]
which contradicts the fact that $A$ is noetherian.  Therefore $\C_m\subseteq \C_n$.
\end{proof}

\begin{theorem}\label{Thm 2.6} Let $\{0\}\neq \C_0\subseteq \C_1\subseteq \C_2 \subseteq \cdots\subseteq  \mod A$ be inclusions of subcategories of $\mod A$ satisfying (G3). Let $n\geq 0$. Moreover, suppose that at least one of the following two properties holds true:
\begin{enumerate}
\item[(i)] (G1) holds true, and for all $M\in \C$, there exists an epimorphism $\pi_M:P_M\rightarrow M$ with $P_M\in \C_0$. 
\item[(ii)] (G2) holds true, and for all $M\in \C$, there exists a monomorphism $\sigma_M:M\rightarrow I_M$ with $I_M\in \C_0$.
\end{enumerate}
Then the following conditions are equivalent:
\begin{enumerate}
\item[(a)] $\C_n=\C_m$ for all $m\geq n$,
\item[(b)] $\ov{\C_{n+1}}\subseteq \C_n$,
\item[(c)] $\und{\C_{n+1}}\subseteq \C_n$.
\end{enumerate}
\end{theorem}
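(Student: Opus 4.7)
The plan is to first dispose of the trivial implications and then reduce the remaining ones to a single case. The implications (a) $\Rightarrow$ (b) and (a) $\Rightarrow$ (c) follow immediately from $\ov{\C_{n+1}}\subseteq \C_{n+1}$, $\und{\C_{n+1}}\subseteq \C_{n+1}$, and the equality $\C_n=\C_{n+1}$ from (a). To handle the converses, I would invoke Lemma~\ref{Dans la preuve du Cor 2.3}: under (i), property (G1) gives $\und{\C_{n+1}}\subseteq \ov{\C_{n+1}}$, so (b) $\Rightarrow$ (c); dually, under (ii), Lemma~\ref{Dans la preuve du Cor 2.3}(b) yields (c) $\Rightarrow$ (b). It therefore suffices to prove (c) $\Rightarrow$ (a) under (i), the case (ii) being obtained by dualizing.

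Assume (c) and (i). I would prove by induction on $k\geq 0$ that $\C_{n+k}\subseteq \C_n$, which, combined with the ambient inclusion $\C_n\subseteq \C_{n+k}$, yields (a). The case $k=0$ is trivial. For $k=1$, the argument of Lemma~\ref{Thm 2.5} carries over (its proof relies only on $m>n$, so it applies also to $n=0$): if some $M\in \C_{n+1}\setminus \C_n$ existed, Lemma~\ref{Lem 1.3}(a) would produce a nonzero proper submodule $L_1\in \und{\C_{n+1}}\subseteq \C_n$; property (G1)(a) would force $M/L_1\in \C_{n+1}$, while (G3) would forbid $M/L_1\in \C_n$; iterating this construction on $M/L_1$ builds an infinite strictly ascending chain of submodules of $M$, contradicting noetherianity.

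For the inductive step with $k\geq 2$, I would pick $M\in \C_{n+k}$ and, using hypothesis (i), an epimorphism $\pi\colon P\to M$ with $P\in \C_0$. Setting $K=\Ker \pi$, the short exact sequence $0\to K\to P\to M\to 0$ together with (G1)(b) yields $K\in \C_{\max\{0,n+k-1\}}=\C_{n+k-1}$, and the inductive hypothesis places $K$ in $\C_n$. Property (G1)(a) applied to the same sequence with $K\in \C_n$ and $P\in \C_0\subseteq \C_n$ then gives $M\in \C_{\max\{n+1,0\}}=\C_{n+1}$, and the already-established $k=1$ case finishes with $M\in \C_n$. The main obstacle, I expect, is the $k=1$ step, where one must combine Lemma~\ref{Lem 1.3}(a), (G1)(a), (G3), and noetherianity to obtain a contradiction; the role of hypothesis (i) is comparatively routine but essential, since it is what triggers the index drop $n+k \leadsto n+k-1$ via (G1)(b) and so permits the induction to propagate.
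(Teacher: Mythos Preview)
Your proof is correct and follows essentially the same strategy as the paper's own argument: both reduce to the case under (i), use the epimorphism from $\C_0$ together with (G1)(b) to drop the index, invoke the $\C_{n+1}=\C_n$ step (Lemma~\ref{Thm 2.5}) as the base case, and then climb back up using (G1)(a). The only cosmetic difference is that you package the descent-and-ascent as an induction on $k$, whereas the paper fixes $m$ and carries out the descent to $\C_{n+1}$ and the subsequent ascent in one pass; your explicit remark that the proof of Lemma~\ref{Thm 2.5} works also for $n=0$ is a small but welcome clarification.
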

\begin{proof}
We only prove the assertion when (i) holds true, the proof when (ii) holds true is dual. 
It is clear that (a) implies (b), while (b) implies (c) by (\ref{Dans la preuve du Cor 2.3}). To show that (c) implies (a), suppose that $\und{\C_{n+1}}\subseteq \C_n$ and let $M_0\in\C_m$, with $m>n$. By assumption, there exists an epimorphism $f_0:P_{M_0}\rightarrow M_0$, with $P_{M_0}\in\C_0$. Then it follows from (G1)(b) that $\Ker f_0\in \C_{m-1}$.  Set $M_1=\Ker f_0$.  By repeating this process, we obtain a family of short exact sequences $0\rightarrow M_{i+1}\rightarrow P_{M_i}\rightarrow M_i\rightarrow 0$, where $M_i\in\C_{m-i}$ and $P_{M_i}\in\C_0$ for all $i=0, 1, \dots, m$. In particular, $M_{m-n-1}\in\C_{n+1}$. By hypothesis, we have $\und{\C_{n+1}}\subseteq \C_n$. By (\ref{Thm 2.5}), we get $\C_{n+1}=\C_n$ and $M_{m-n-1}\in\C_n$. Then, the short exact sequence $0\rightarrow M_{m-n-1}\rightarrow P_{M_{m-n-2}}\rightarrow M_{m-n-2}\rightarrow 0$ together with (G1)(a) gives $M_{m-n-2}\in\C_{n+1}=\C_n$.  By induction, we get $M_i\in\C_n$ for all $i=0, 1, \dots, m-n-1$.  In particular, $M_0\in\C_n$.  So $\C_m=\C_n$ for all $m\geq n$.
\end{proof}

\begin{conjecture}\label{Conj} 
In the setting of (\ref{Thm 2.6}), we conjecture that $\C_n=\C_m$ for all $m\geq n$ if and only if $\ov{\C_n}=\und{\C_n}$.
\end{conjecture}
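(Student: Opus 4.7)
The plan is to handle the two implications separately: one follows directly from the closure results of Section~1, while the other is the substance of the conjecture.

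For the easy direction, assume $\C_n=\C_m$ for all $m\geq n$, so in particular $\C_n=\C_{n+1}$. Working under (G1) (the case (G2) is dual), Lemma~\ref{Dans la preuve du Cor 2.3}(a) gives that $\C_n$ is closed under kernels of epimorphisms. Moreover, for any short exact sequence $0\to L\to M\to N\to 0$ with $L,M\in\C_n$, property (G1)(a) yields $N\in\C_{\max\{n+1,n\}}=\C_{n+1}=\C_n$, so $\C_n$ is also closed under cokernels of monomorphisms. Lemma~\ref{Rem 1.1(a)(b)}(c) then gives $\ov{\C_n}=\und{\C_n}$.

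For the converse, assume $\ov{\C_n}=\und{\C_n}$. By Theorem~\ref{Thm 2.6}, it suffices to establish $\und{\C_{n+1}}\subseteq\C_n$. The natural approach is to pick $M\in\und{\C_{n+1}}$ of minimal length and argue by contradiction that $M\in\C_n$. Under hypothesis (i), the epimorphism $\pi_M\colon P_M\to M$ with $P_M\in\C_0\subseteq\C_n$ together with (G1)(b) forces $\Ker\pi_M\in\C_n$; one would then iterate Lemma~\ref{Lem 1.3} on $\Ker\pi_M$ to extract elements of $\und{\C_n}=\ov{\C_n}$ and attempt to deduce a submodule or quotient structure on $M$ incompatible with $M\in\und{\C_{n+1}}\setminus\C_n$.

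The main obstacle is precisely this reverse direction. The identification $\ov{\C_n}=\und{\C_n}$ carries only global information: it says that every module in $\C_n$ having a proper nonzero submodule in $\C_n$ also has \emph{some} proper nonzero quotient in $\C_n$, but offers no control over which quotient, and in particular does not directly upgrade $\C_n$ to being closed under cokernels of monomorphisms (which, combined with (G1)(a), would immediately force $\C_{n+1}=\C_n$). Consequently, the contradiction attempted above may fail to close, since an obstructing module $M\in\C_{n+1}\setminus\C_n$ of small length can remain invisible to the extremal members of $\C_n$ captured by $\ov{\C_n}=\und{\C_n}$. Bridging this gap seems to require transferring information about the extremal members of $\C_{n+1}$ back into $\C_n$, likely through a finer use of the projective covers of (i) or the injective envelopes of (ii) than what the elementary arguments of Theorem~\ref{Thm 2.6} provide, and this is why the statement is offered only as a conjecture.
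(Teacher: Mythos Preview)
The statement is a \emph{conjecture} in the paper, not a theorem: the paper offers no proof at all, only the assertion. Your proposal is therefore not to be compared against a paper proof, but evaluated on its own merits.

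Your forward direction is correct and is a genuine addition to what the paper contains. Under (G1), once $\C_{n+1}=\C_n$, property (G1)(a) applied to $0\to L\to M\to N\to 0$ with $L,M\in\C_n$ gives $N\in\C_{n+1}=\C_n$, so $\C_n$ is closed under cokernels of monomorphisms; combined with closure under kernels of epimorphisms from Lemma~\ref{Dans la preuve du Cor 2.3}(a), Lemma~\ref{Rem 1.1(a)(b)}(c) yields $\ov{\C_n}=\und{\C_n}$. The (G2) case is indeed dual. This direction is clean and complete.

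Your discussion of the reverse direction correctly identifies the obstruction: the hypothesis $\ov{\C_n}=\und{\C_n}$ is a statement about the extremal objects of $\C_n$ and does not obviously upgrade to closure of $\C_n$ under cokernels of monomorphisms, which is what one would need to force $\C_{n+1}\subseteq\C_n$ directly via (G1)(a). Your sketch of a minimal-length argument and the remark that it fails to close are honest; this is precisely why the authors left the statement as a conjecture. You have accurately located the gap, and there is nothing wrong with your proposal beyond the fact that it does not resolve the conjecture---which the paper does not either.
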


\begin{example}\label{ex findim}
Let $\mathcal{C}$ be a subcategory of $\mod A$. For each $n\geq0$, let
\[
\C_n= \ ^{\perp_n}\mathcal{C}=\{M\in\mod A \ | \ \Ext^i_A(M, -)\mid_{\mathcal{C}}=0, \text{ for all }i>n\}.
\]
We then have inclusions of subcategories $\{0\}\neq\C_0\subseteq \C_1\subseteq \C_2 \subseteq \cdots \subseteq \mod A$ satisfying the conditions (G1) and (G3). Moreover, since all projective $A$-modules are in $\C_0$, the hypothesis (i) of (\ref{Thm 2.6}) holds true.  We can therefore apply (\ref{Thm 2.5}) and (\ref{Thm 2.6}).

In particular, if we take $\mathcal{C}=\mod A$, then 
\[
\C_n=\{M\in\mod A \ | \ \pd  M\leq n\},
\] that is the set of all finitely generated $A$-modules of projective dimension at most $n$.  This set is often denoted by $\mathcal{P}^{\leq n}$ in the literature.

In this case, (\ref{Thm 2.6}) states that the following conditions are equivalent:
\begin{enumerate}
\item[(a)] $\findim A\leq n$.
\item[(b)] $\und{\mathcal{P}^{\leq n+1}}\subseteq \mathcal{P}^{\leq n}$.
\item[(c)] $\ov{\mathcal{P}^{\leq n+1}}\subseteq \mathcal{P}^{\leq n}$.
\end{enumerate}
Observe that $\und{\mathcal{P}^{\leq n+1}}\subseteq \ov{\mathcal{P}^{\leq n+1}}$ by (\ref{Dans la preuve du Cor 2.3}), so verifying $\und{\mathcal{P}^{\leq n+1}}\subseteq \mathcal{P}^{\leq n}$ is, at least in theory, easier than verifying $\ov{\mathcal{P}^{\leq n+1}}\subseteq \mathcal{P}^{\leq n}$.

Dually, given a subcategory $\mathcal{C}$ of $\mod A$, one can consider, for each $n\geq0$, the set
\[
\C_n= \mathcal{C}^{\perp_n}=\{M\in\mod A \ | \ \Ext^i_A(-, M)\mid_{\mathcal{C}}=0, \text{ for all }i>n\}.
\]
We then have inclusions of subcategories $\{0\}\neq\C_0\subseteq \C_1\subseteq \C_2 \subseteq \cdots \subseteq \mod A$ satisfying the conditions (G2) and (G3). Moreover, since all injective $A$-modules are in $\C_0$, the hypothesis (ii) of (\ref{Thm 2.6}) holds true.  We can therefore apply (\ref{Thm 2.5}) and (\ref{Thm 2.6}).

In particular, if we take $\mathcal{C}=\mod A$, then 
\[
\C_n=\{M\in\mod A \ | \ \id_A M\leq n\},
\] that is the set of all finitely generated $A$-modules of injective dimension at most $n$. 
\end{example}

We end this section by showing that ${\CS}={\CQ}$ when (G1) or (G2) holds true, generalizing this observation in the particular case $\P=\bigcup_{n\geq 0}\mathcal{P}^{\leq n}$, as observed at the beginning of Section \ref{Modules of finite projective dimension}.

\begin{proposition}\label{Lem 2.4} Let $\{0\}\neq \C_0\subseteq \C_1\subseteq \C_2 \subseteq \cdots\subseteq  \mod A$ be inclusions of subcategories of $\mod A$. Let $\C=\bigcup_{n\geq 0}\C_n$. If (G1) or (G2) holds true, then: 
\begin{enumerate}
\item[(a)] $\displaystyle\bigcap_{i\geq n}\und{\C_i}=\bigcap_{i\geq n}\ov{\C_i}$ for all $n\geq 0$.
\item[(b)] $\displaystyle{\CS}=\bigcup_{n\geq 0}\left(\bigcap_{i\geq n}\und{\C_i}\right)=\bigcap_{n\geq 0}\left(\bigcup_{i\geq n}\und{\C_i}\right)={\CQ}$.
\end{enumerate}
\end{proposition}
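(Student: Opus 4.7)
The plan is to prove (a) first by an index-bumping argument exploiting the short exact sequence axioms, and then to derive (b) by re-expressing both $\CS$ and $\CQ$ purely in terms of the $\und{\C_i}$ and $\ov{\C_i}$, combined with a monotonicity (liminf equals limsup) observation.

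For part (a), I would assume (G1) holds; the (G2) case is dual. The inclusion $\bigcap_{i\geq n}\und{\C_i}\subseteq \bigcap_{i\geq n}\ov{\C_i}$ is immediate from (\ref{Dans la preuve du Cor 2.3})(a). For the reverse, fix $M\in \bigcap_{i\geq n}\ov{\C_i}$ and $i\geq n$, and suppose for contradiction that some proper nonzero submodule $L$ of $M$ lies in $\C_i$. Applying (G1)(a) to $0\to L\to M\to M/L\to 0$ with $l=m=i$ yields $M/L\in \C_{\max\{i+1,i\}}=\C_{i+1}$, contradicting $M\in \ov{\C_{i+1}}$. Hence no such $L$ exists, so $M\in \und{\C_i}$ and the two intersections agree.

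For part (b), I would first rewrite the endpoints. Since the chain $\C_0\subseteq \C_1\subseteq \cdots$ is ascending with union $\C$, a module $M$ lies in $\CS$ if and only if there is some $n$ with $M\in \C_n$ (equivalently $M\in \C_i$ for all $i\geq n$) and no proper nonzero submodule of $M$ lies in $\C_k$ for any $k$; by monotonicity of the chain, the latter condition is equivalent to requiring it only for $k\geq n$, which gives $M\in \bigcap_{i\geq n}\und{\C_i}$. Thus $\CS = \bigcup_n\bigcap_{i\geq n}\und{\C_i}$, and dually $\CQ = \bigcup_n\bigcap_{i\geq n}\ov{\C_i}$; by (a) these two sets are equal, so $\CS = \CQ$ and the outer equalities of (b) follow.

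The middle equality $\bigcup_n\bigcap_{i\geq n}\und{\C_i} = \bigcap_n\bigcup_{i\geq n}\und{\C_i}$ still needs justification. The inclusion $\subseteq$ is formal. For $\supseteq$, given $M$ with $M\in \und{\C_{i_k}}$ for an unbounded sequence $(i_k)$, I would pick one such $i_1$ and show $M\in \und{\C_j}$ for every $j\geq i_1$: any proper nonzero submodule $L\in \C_j$ would, upon choosing $i_k\geq j$, land in $\C_{i_k}$, contradicting $M\in \und{\C_{i_k}}$. Thus $M\in \bigcap_{j\geq i_1}\und{\C_j}$, as required. The main obstacle I expect is conceptual rather than calculational: recognising that the index-bumping mechanism of (G1)(a) promotes a hypothetical quotient out of $\C_i$ into the next stratum $\C_{i+1}$, at which level the stability of $M$ (i.e.\ membership in $\ov{\C_{i+1}}$) is what supplies the contradiction.
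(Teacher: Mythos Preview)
Your argument is correct and follows essentially the same route as the paper: the paper also identifies $\bigcap_{i\geq n}\und{\C_i}=\{M\in\C_n\mid 0\neq L\subset M\Rightarrow L\notin\C\}$ and its quotient analogue, passes between them via (G1)(a) and (G1)(b) (your index-bumping for the reverse inclusion in (a) is a localized variant of the same contrapositive), and deduces (b) from these descriptions together with (a). You are in fact more complete on one point: the paper's written proof of (b) does not spell out the middle equality $\bigcup_n\bigcap_{i\geq n}\und{\C_i}=\bigcap_n\bigcup_{i\geq n}\und{\C_i}$, which your monotonicity argument supplies.
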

\begin{proof}
We suppose that (G1) holds true.  The proof when (G2) holds true is similar. Let $n\geq 0$. First, it is easy to see that 
\[
\bigcap_{i\geq n}\und{\C_i}=\{M\in\C_n \ | \ M\neq 0 \text{ and } 0\neq L \subset M \text{ implies } L\notin \C\}
\]
and
\[
\bigcap_{i\geq n}\ov{\C_i}=\{M\in\C_n \ | \ M\neq 0 \text{ and } 0\neq L \subset M \text{ implies } M/L\notin \C\}
\]

(a).  Suppose that $M\in \bigcap_{i\geq n}\und{\C_i}$. Consequently, $M\in\C_n$ and if $L$ is a nonzero proper submodule of $M$, then $L\notin \C$. Consequently, it follows from (G1)(b) that $M/L\notin \C$.  This shows that $M\in \bigcap_{i\geq n}\ov{\C_i}$, and thus $\bigcap_{i\geq n}\und{\C_i}\subseteq\bigcap_{i\geq n}\ov{\C_i}$.   Similarly, we show the reverse inclusion by using (G1)(a).

(b). We have
\[
\begin{array}{rcl}
{\CS}&=&\{M\in\C \ | \ M\neq 0 \text{ and } 0\neq L \subset M \text{ implies } L\notin \C\} \\
&=&\{M\in\bigcup_{n\geq 0}\C_n \ | \ M\neq 0 \text{ and } 0\neq L \subset M \text{ implies } L\notin \C\} \\
& = & \bigcup_{n\geq 0}\left(\bigcap_{i\geq n}\und{\C_i}\right)\\
& = & \bigcup_{n\geq 0}\left(\bigcap_{i\geq n}\ov{\C_i}\right) \text{(by (a))} \\
&=&\{M\in\bigcup_{n\geq 0}\C_n \ | \ M\neq 0 \text{ and } 0\neq L \subset M \text{ implies } M/L\notin \C\} \\
&=&\{M\in\C \ | \ M\neq 0 \text{ and } 0\neq L \subset M \text{ implies } M/L\notin \C\} \\
&=&{\CQ}
\end{array}
\]
\end{proof}

\section{Torsion pairs and modules with (co)standard filtrations}

The torsion pairs provide a rich family of extension-closed subcategories on which our results apply.  Indeed, it is well known that if $(\T, \F)$ is a torsion pair, then $\F$ is closed under extensions and kernels of epimorphisms, while $\T$ is closed under extensions and cokernels of monomorphisms (see Proposition \ref{Prop Torsion}).  The torsion pairs play an important role in representation theory of algebras, specifically related to tilting theory, and more generally to $\tau$-tilting theory, see \cite{AIR14}. In this section, we will have a closer look at the classes $\ov{\F}$ and $\und{\T}$ and their interactions. In particular, we show that these classes are totally compatible with the Brenner-Butler Theorem when $(\T, \F)$ is induced by a tilting module (Theorem \ref{Prop 3.8}).

The second part of this section concerns the modules with standard or costandard filtrations, in connection with standardly stratified and quasi-hereditary algebras.

In the sequel, $A$ stands for an artin algebra, thus, in particular, an artinian algebra.

\subsection{Torsion pairs}

We first recall that a pair $(\mathcal{T}, \mathcal{F})$ of full subcategories of $\mod A$ is called a \textbf{torsion pair} if, for all $M\in\mod A$, we have 
\begin{enumerate}
\item[(a)] $\Hom_A(M,-)\mid_{\F}=0$ if and only if $M\in\T$, and
\item[(b)] $\Hom_A(-,M)\mid_{\T}=0$ if and only if $M\in\F$.
\end{enumerate}
In this case, $\T$ is called the \textbf{torsion class} and $\F$ is called the \textbf{torsion-free class}.

The following proposition gathers several known results about torsion pairs, see for instance \cite[(Chapter VI)]{ASS06}.

\begin{proposition}\label{Prop Torsion}
Let $(\T, \F)$ be a torsion pair.
\begin{enumerate}
\item[(a)] $\T$ is closed under quotients (in particular, cokernels of monomorphisms), direct sums, and extensions.
\item[(b)] $\F$ is closed under submodules (in particular, kernels of epimorphisms), direct products, and extensions.
\item[(c)] There exists a subfunctor $t$ of the identity functor on $\mod A$ such that, for all $M\in \mod A$, we have:
\begin{enumerate}
\item[(i)] $tM\in \T$,
\item[(ii)] $M/tM\in\F$,
\item[(iii)] $tM=M$ if and only if $M\in\T$,
\item[(iv)] $tM=0$ if and only if $M\in \F$,
\end{enumerate}
\end{enumerate}
\end{proposition}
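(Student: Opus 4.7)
The proof will be carried out by systematic use of the two defining Hom-vanishing properties of the torsion pair. I treat parts (a) and (b) first, which are dual to each other, and reserve (c) for a separate construction.

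For part (a), closure of $\T$ under quotients, direct sums and extensions all reduce to checking $\Hom_A(M,-)\mid_{\F}=0$. If $M\in\T$ and $p\colon M\twoheadrightarrow N$ is an epimorphism, then any $f\colon N\to F$ with $F\in\F$ satisfies $fp=0$ since $M\in\T$; since $p$ is epi, $f=0$, so $N\in\T$. For direct sums, the universal property gives $\Hom_A(\bigoplus_i M_i,F)\cong\prod_i\Hom_A(M_i,F)=0$. For extensions, given $0\to L\to M\to N\to 0$ with $L,N\in\T$ and $f\colon M\to F$ with $F\in\F$, the restriction $f|_L$ vanishes, so $f$ factors through $N\to F$, which is also zero. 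Part (b) is proved dually, using products in place of sums and pushouts in place of pullbacks, and noting that kernels of epimorphisms are submodules while direct products are characterized by the Hom-out-of adjunction.

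For part (c), I propose to define, for each $M\in\mod A$,
\[
tM \;=\; \sum\{N\leq M \mid N\in\T\}.
\]
By part (a), $tM$ is a quotient of the direct sum of all submodules of $M$ lying in $\T$, hence $tM\in\T$, establishing (i). To prove (ii), let $T'\in\T$ and $f\colon T'\to M/tM$. Writing $\pi\colon M\to M/tM$ for the projection and $L=\pi^{-1}(\operatorname{Im} f)$, we get a short exact sequence $0\to tM\to L\to\operatorname{Im} f\to 0$ in which both $tM$ and $\operatorname{Im} f$ lie in $\T$ (the latter being a quotient of $T'$). Closure of $\T$ under extensions forces $L\in\T$, hence $L\subseteq tM$ by maximality and thus $\operatorname{Im} f=0$. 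This shows $\Hom_A(T', M/tM)=0$ for all $T'\in\T$, so $M/tM\in\F$. Functoriality of $t$ follows because any $\varphi\colon M\to N$ sends $tM$ into $N$ as a submodule that is a quotient of $tM\in\T$, hence into $tN$ by maximality. Parts (iii) and (iv) are now immediate: $M\in\T\iff M\subseteq tM\iff tM=M$; and if $M\in\F$, any submodule $N$ of $M$ lying in $\T$ admits the inclusion $N\hookrightarrow M$, which must vanish by the defining property of the torsion pair, giving $tM=0$, while conversely $tM=0$ together with (ii) forces $M=M/tM\in\F$.

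The only delicate point is the verification of (ii), since it requires the pullback/extension argument above rather than a direct Hom computation; everything else is a mechanical unpacking of the two defining Hom-vanishing conditions. I would therefore write out the proof of (ii) in full and merely sketch the symmetric arguments in (a) and (b).
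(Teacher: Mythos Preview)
The paper does not actually prove this proposition; it introduces it as a collection of ``several known results about torsion pairs'' and simply cites \cite[(Chapter VI)]{ASS06}. Your argument is correct and is essentially the standard one found in that reference. The only point worth tightening is in part (c)(i): you realise $tM$ as a quotient of the direct sum of \emph{all} torsion submodules of $M$, which a priori need not lie in $\mod A$; since $A$ is artinian, $M$ has finite length and the sum can be replaced by a finite one, so the conclusion stands. (The aside about ``pushouts in place of pullbacks'' in your sketch of (b) is also a slip, since no pullbacks appeared in your proof of (a).)
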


It thus follows from (\ref{Thm 1.5}) and (\ref{Prop 1.7}) that $\und{\T}$ is the smallest class of modules that filters (and cofilters) $\T$, while $\ov{\F}$ is the smallest class of modules that filters (and cofilters) $\F$.  Moreover, we get the following nice observations.

\begin{lemma}\label{Prop 3.2 and Cor 3.14}
Let $(\T, \F)$ be a torsion pair, and let $M$ be a nonzero module in $\mod A$. 
\begin{enumerate}
\item[(a)] $M\in \T$ if and only if $\Hom_A(M,-)\mid_{\ov{\F}}=0$.
\item[(b)] $M\in \F$ if and only if $\Hom_A(-, M)\mid_{\und{\T}}=0$.
\item[(c)] If $M\in\T$, then $M\in \und{\T}$ if and only if every proper nonzero submodule of $M$ is in $\F$.
\item[(d)] If $M\in\F$, then $M\in \ov{\F}$ if and only if every proper nonzero quotient of $M$ is in $\T$.
\end{enumerate}
\end{lemma}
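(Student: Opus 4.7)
The plan is to deduce (a) and (b) as direct consequences of Proposition \ref{Prop 1.9} at $n=0$, and to verify (c) and (d) using the torsion radical $t$ from Proposition \ref{Prop Torsion}(c).

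For (a), the torsion-free class $\F$ is closed under kernels of epimorphisms (Proposition \ref{Prop Torsion}(b)), so Proposition \ref{Prop 1.9}(a)(ii), applied with $\C = \F$, $\CQ = \ov{\F}$, and $n = 0$, yields the equivalence $\Hom_A(M,-)\mid_\F = 0 \iff \Hom_A(M,-)\mid_{\ov{\F}} = 0$. Combining this with the defining equivalence $M \in \T \iff \Hom_A(M,-)\mid_\F = 0$ of a torsion pair gives (a). Part (b) is dual: $\T$ is closed under cokernels of monomorphisms (Proposition \ref{Prop Torsion}(a)), so Proposition \ref{Prop 1.9}(b)(i), applied with $\C = \T$, $\CS = \und{\T}$, and $n = 0$, yields $\Hom_A(-,M)\mid_\T = 0 \iff \Hom_A(-,M)\mid_{\und{\T}} = 0$, which combines with $M \in \F \iff \Hom_A(-,M)\mid_\T = 0$.

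For (c), unravelling the definition of $\und{\T}$, the condition $M \in \und{\T}$ amounts to asking that no proper nonzero submodule of $M$ belongs to $\T$. The implication ``every proper nonzero submodule lies in $\F$ $\Rightarrow$ $M \in \und{\T}$'' is immediate, since any nonzero module $L$ in $\T \cap \F$ would satisfy $\Hom_A(L,L) = 0$, forcing $\id_L = 0$ and hence $L = 0$. Conversely, if $M \in \und{\T}$ and $0 \neq L \subsetneq M$, then $tL \in \T$ and $tL \subseteq L \subsetneq M$, so $tL$ is a proper submodule of $M$ lying in $\T$; the definition of $\und{\T}$ forces $tL = 0$, whence $L \in \F$.

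Part (d) is the formal dual, passing from the submodule $tL$ to the quotient $(M/L)/t(M/L) \in \F$. Concretely, assume $M \in \ov{\F}$ and $0 \neq L \subsetneq M$, and suppose for a contradiction that $M/L \notin \T$. Then $t(M/L) \subsetneq M/L$ corresponds to a submodule $L \subseteq L' \subsetneq M$ with $L' \neq 0$ (it contains the nonzero $L$) such that $M/L' \cong (M/L)/t(M/L) \in \F$; this is a proper nonzero quotient of $M$ in $\F$, contradicting $M \in \ov{\F}$. I do not anticipate any substantive obstacle here: everything reduces to having Propositions \ref{Prop 1.9} and \ref{Prop Torsion} in hand, and the only point requiring care is the bookkeeping in (d) that $L'$ is genuinely a nonzero proper submodule of $M$, so that $M/L'$ qualifies as a proper nonzero quotient in the sense of the definition of $\ov{\F}$.
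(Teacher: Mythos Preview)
Your proof is correct and follows essentially the same approach as the paper: (a) and (b) are deduced from Proposition~\ref{Prop 1.9} at $n=0$ together with the defining properties of a torsion pair, and (c) is proved via the torsion radical (using $tL\subseteq L\subsetneq M$ to force $tL=0$), with (d) dual. Your write-up is simply more explicit, spelling out which part of Proposition~\ref{Prop 1.9} is invoked and giving the details of the dual argument for (d).
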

\begin{proof}
(a) and (b) follow from (\ref{Prop 1.9}) and the definition of a torsion pair. We will prove (c), the proof of (d) is dual.

(c). The sufficiency is clear since $\F\cap\T=\{0\}$. For the necessity, suppose that $M\in\und{\T}$. Suppose that $L$ is a proper nonzero submodule of $M$.  Then $tL\subseteq L\subset M$, which forces $tL=0$ since $M\in\und{\T}$ and $tL\in\T$. Consequently, $L\in\F$ by (\ref{Prop Torsion}).
\end{proof}

Before studying in greater details the torsion pairs induced by tilting modules, recall that a torsion pair $(\T, \F)$ is said to be \textbf{hereditary} if $\T$ is closed under submodules, and \textbf{cohereditary} if $\F$ is closed under quotients.

The (co)hereditary torsion pairs have been heavily studied and have connections with many other concepts, for instance the TTF-triples, the strongly complete Serre classes and the strongly complete filters of ideals of a ring.


It turns out that we can describe the hereditary and cohereditary torsion pairs as follows.

\begin{proposition}\label{Cor 3.12}
Let $(\T, \F)$ be a torsion pair in $\mod A$. If $\T$ and $\F$ are nonzero subcategories of $\mod A$, then
\begin{enumerate}
\item[(a)] $(\T, \F)$ is a hereditary torsion pair if and only if $\und{\T}\subseteq \ov{\T}$.
\item[(b)] $(\T, \F)$ is a cohereditary torsion pair if and only if $\ov{\F}\subseteq \und{\F}$.
\end{enumerate}
\end{proposition}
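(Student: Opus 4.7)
The plan is to recognize Proposition~\ref{Cor 3.12} as essentially an instance of Proposition~\ref{Prop 3.10} applied to the two halves of a torsion pair. By Proposition~\ref{Prop Torsion}, both $\T$ and $\F$ contain $0$ and are closed under extensions, $\T$ is automatically closed under quotients, and $\F$ is automatically closed under submodules. These are exactly the hypotheses needed to run Proposition~\ref{Prop 3.10}.

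For part (a), the torsion pair $(\T,\F)$ is hereditary by definition when $\T$ is closed under submodules. Since $\T$ is already closed under quotients, part (b) of Proposition~\ref{Prop 3.10} (applied with $\C=\T$) states that closure of $\T$ under submodules is equivalent to $\und{\T}\subseteq\ov{\T}$. This gives (a) immediately.

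For part (b), $(\T,\F)$ is cohereditary when $\F$ is closed under quotients. Since $\F$ is already closed under submodules, part (a) of Proposition~\ref{Prop 3.10} (applied with $\C=\F$) states that closure of $\F$ under quotients is equivalent to $\ov{\F}\subseteq\und{\F}$. This yields (b).

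There is no real obstacle here: the only thing to check is that the hypotheses of Proposition~\ref{Prop 3.10} are met by $\T$ and $\F$, which follows verbatim from Proposition~\ref{Prop Torsion}. If one wanted to avoid invoking Proposition~\ref{Prop 3.10} and prove the statement from scratch, the substantive direction in each case would be, e.g. for (a), to show that $\und{\T}\subseteq\ov{\T}$ forces every submodule of a module in $\T$ to lie in $\T$; this is done by taking $M\in\T$ and a submodule $L\subseteq M$, writing the torsion decomposition $0\to tL\to L\to L/tL\to 0$, and using the $\und{\T}$-filtration of $M$ together with $\und{\T}=\ov{\T}$ and Remark~\ref{Prop 3.13} to conclude $L/tL=0$. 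But this is just a rederivation of Proposition~\ref{Prop 3.10}, so the clean route is to quote it directly.
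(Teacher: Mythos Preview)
Your proposal is correct and matches the paper's own proof, which simply reads ``This follows directly from (\ref{Prop 3.10}) and (\ref{Prop Torsion}).'' You have merely unpacked which part of Proposition~\ref{Prop 3.10} is applied to which half of the torsion pair, which is exactly the intended argument.
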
 
\begin{proof}
This follows directly from (\ref{Prop 3.10}) and (\ref{Prop Torsion}). 
\end{proof}

Observe that if $(\T, \F)$ is a hereditary torsion pair, then it follows from (\ref{Rem 1.1(a)(b)}) and (\ref{Lem 3.11}) that $\und{\T}=\ov{\T}$ and that this set consists of all simple $A$-modules in $\T$.  In this case, one can apply the conclusions of (\ref{Prop 3.13}) on $\T$. Similarly, if $(\T, \F)$ is a cohereditary torsion pair, then $\und{\F}=\ov{\F}$ and this set consists of all simple $A$-modules in $\F$.

\subsection{Torsion pairs induced by tilting modules}

The purpose of this section is to show that $\und{\T}$ and $\ov{\F}$ are compatible with the Brenner-Butler Theorem when $(\T, \F)$ is a torsion pair induced by a tilting module. We first quickly recall the necessary background, see for instance \cite[(Chapter VI)]{ASS06} for more details.

Recall that an $A$-module $T$ is said to be a \textbf{tilting module} if 
\begin{enumerate}
\item[(a)] $\pd  T\leq 1$,
\item[(b)] $\Ext_A^1(T,T)=0$,
\item[(c)] There exists a short exact sequence $0\rightarrow A\rightarrow T'\rightarrow T''\rightarrow 0$, where $T'$ and $T''$ are direct sums of direct summands of the module $T$.
\end{enumerate}
Given a (right) tilting $A$-module $T$, let
\[
\T=\T(T)=\{M\in\mod A \ | \ \Ext_A^1(T,M)=0\}
\]
 and 
\[
\F=\F(T)=\{M\in\mod A \ | \ \Hom_A(T,M)=0\}.
\] 
Moreover, let $B=\End_AT$. Then $T$ is a (left) $B$-module. Consider 
\[
\X=\X(T)=\{X \in \mod B \ | \ X\otimes_BT=0\}
\] 
and 
\[
\Y=\Y(T)=\{ Y\in \mod B \ | \ \Tor_1^B(Y,T)=0\}.
\]
It is well known that $(\T, \F)$ is a torsion pair in $\mod A$ and $(\X, \Y)$ is a torsion pair in $\mod B$.

Under these assumptions and notations, the Brenner-Butler Theorem \cite{BB80} asserts that:
\begin{itemize}
\item The functors $\Hom_A(T,-):\mod A \rightarrow \mod B$ and $-\otimes_B T:\mod B\rightarrow \mod A$ induce quasi-inverse equivalence between $\T$ and $\Y$, and
\item The functors $\Ext^1_A(T,-):\mod A \rightarrow \mod B$ and $\Tor^B_1(-, T):\mod B\rightarrow \mod A$ induce quasi-inverse equivalence between $\F$ and $\X$.
\end{itemize}

The following remark gathers some known properties of the involved functors.  Their easy verifications are left to the reader.

\begin{remark} \label{Rem 3.4}
Let $T$ be a tilting $A$-module.
\begin{enumerate}
\item[(a)] The functor $\Hom_A(T, -)$ is left exact.  Moreover, if $0\rightarrow L\rightarrow M\rightarrow N\rightarrow 0$ is a short exact sequence in $\mod A$, with $L,M,N\in \T$, then the induced sequence $0\rightarrow \Hom_A(T,L)\rightarrow \Hom_A(T,M)\rightarrow \Hom_A(T,N)\rightarrow 0$ is exact in $\mod B$.
\item[(b)] The functor $-\otimes_B T$ is right exact.  Moreover, if $0\rightarrow L\rightarrow M\rightarrow N\rightarrow 0$ is a short exact sequence in $\mod B$, with $L,M,N\in \Y$, then the induced sequence $0\rightarrow L\otimes_B T\rightarrow M\otimes_B T\rightarrow N\otimes_B T\rightarrow 0$ is exact in $\mod A$.
\item[(c)] The functor $\Ext^1_A(T, -)$ is right exact.  Moreover, if $0\rightarrow L\rightarrow M\rightarrow N\rightarrow 0$ is a short exact sequence in $\mod A$, with $L,M,N\in \F$, then the induced sequence $0\rightarrow \Ext^1_A(T,L)\rightarrow \Ext^1_A(T,M)\rightarrow \Ext^1_A(T,N)\rightarrow 0$ is exact in $\mod B$.
\item[(d)] The functor $\Tor_1^B(-, T)$ is left exact.  Moreover, if $0\rightarrow L\rightarrow M\rightarrow N\rightarrow 0$ is a short exact sequence in $\mod B$, with $L,M,N\in \X$, then the induced sequence $0\rightarrow \Tor_1^B(L, T)\rightarrow \Tor_1^B(M, T)\rightarrow \Tor_1^B(N, T)\rightarrow 0$ is exact in $\mod A$.
\end{enumerate}
\end{remark}

The next result completes the above remark and characterizes the families $\ov{\F}$, $\und{\T}$, $\ov{\Y}$ and $\und{\X}$.

\begin{proposition}\label{Cor 3.3 and Cor 3.5}
Let $T$ be a tilting $A$-module.
\begin{enumerate}
\item[(a)] If $M\in \mod A$, then $M\in \und{\T}$ if and only if, for each epimorphism $f:M\rightarrow N$ in $\mod A$, the induced sequence $0\rightarrow \Hom_A(T, X)\rightarrow \Hom_A(T, N)\rightarrow \Ext_A^1(T, \Ker f)\rightarrow 0$ is exact in $\mod B$.
\item[(b)] If $M\in\mod B$, then $M\in \ov{\Y}$ if and only if, for each monomorphism $f:M\rightarrow X$ in $\mod B$, then induced sequence $0\rightarrow \Tor^B_1(\Coker f, T)\rightarrow M\otimes_B T\rightarrow X\otimes_B T\rightarrow 0$ is exact in $\mod A$.
\item[(c)] If $M\in \mod A$, then $M \in \ov{\F}$ if and only if, for each monomorphism $f:M\rightarrow X$ in $\mod A$, the induced sequence $0\rightarrow \Hom_A(T, \Coker f)\rightarrow \Ext_A^1(T, M)\rightarrow \Ext_A^1(T, X)\rightarrow 0$ is exact in $\mod B$.
\item[(d)] If $M\in\mod B$, then $M\in \und{\X}$ if and only if, for each epimorphism $f:X\rightarrow N$ in $\mod B$, the induced sequence $0\rightarrow \Tor_1^B(X, T)\rightarrow \Tor_1^B(N, T)\rightarrow \Ker f\otimes_B T\rightarrow 0$ is exact in $\mod A$.
\end{enumerate}
\end{proposition}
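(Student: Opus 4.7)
Plan. The four parts are parallel; I describe the strategy for part (a) in detail, the remaining parts being dual (for (c)) or analogous via the Brenner-Butler equivalence (for (b) and (d)). For (a), apply $\Hom_A(T,-)$ to the short exact sequence $0 \to \Ker f \to M \to N \to 0$ arising from the epimorphism $f\colon M \to N$. Because $\pd T \leq 1$, the long exact sequence truncates to six terms,
\[
0 \to \Hom_A(T, \Ker f) \to \Hom_A(T, M) \to \Hom_A(T, N) \to \Ext^1_A(T, \Ker f) \to \Ext^1_A(T, M) \to \Ext^1_A(T, N) \to 0.
\]
The four-term sequence in the statement is exactly what survives once one imposes the two vanishings $\Hom_A(T, \Ker f) = 0$ (equivalently $\Ker f \in \F$) and $\Ext^1_A(T, M) = 0$ (equivalently $M \in \T$).

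For the ``only if'' direction, Lemma \ref{Prop 3.2 and Cor 3.14}(c) provides both vanishings: $M \in \und{\T}$ forces $M \in \T$, and for any epimorphism $f$ out of $M$ either $\Ker f = 0$ or $\Ker f$ is a proper nonzero submodule of $M$ and hence lies in $\F$. Substituting into the six-term sequence collapses it to the desired four-term exact sequence. For the converse, one reads the two vanishings back out of the exactness: injectivity of the first nonzero map gives $\Hom_A(T, \Ker f) = 0$ for every kernel of an epimorphism out of $M$, which translates into ``every proper submodule of $M$ lies in $\F$''; surjectivity of the last nonzero map forces the connecting map $\Ext^1_A(T, \Ker f) \to \Ext^1_A(T, M)$ to vanish, and a suitable choice of $f$ then extracts $\Ext^1_A(T, M) = 0$, so that $M \in \T$. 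A second application of Lemma \ref{Prop 3.2 and Cor 3.14}(c) concludes that $M \in \und{\T}$.

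Parts (b), (c) and (d) follow from the same template, with the following replacements: one uses the short exact sequence $0 \to M \to X \to \Coker f \to 0$ or $0 \to \Ker f \to M \to N \to 0$ according to whether $f$ is a monomorphism or an epimorphism; one applies the appropriate Brenner-Butler functor among $-\otimes_B T$, $\Ext^1_A(T, -)$ and $\Tor^B_1(-, T)$, the truncation of the long exact sequence to six terms being ensured by Remark \ref{Rem 3.4}; and one invokes the corresponding part of Lemma \ref{Prop 3.2 and Cor 3.14} characterizing $\ov{\Y}$, $\ov{\F}$ or $\und{\X}$. The main obstacle throughout is the converse direction, where one must disentangle the exactness of the short four-term sequence into two independent vanishings: one witnessing the membership of every arising kernel or cokernel in the correct torsion-free or torsion class, and one witnessing the vanishing of the relevant $\Ext^1$ or $\Tor_1$ on $M$ itself.
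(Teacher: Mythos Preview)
Your proposal is correct and follows essentially the same route as the paper: both arguments unwind the six-term long exact sequence obtained by applying $\Hom_A(T,-)$ to $0\to\Ker f\to M\to N\to 0$ (truncated using $\pd T\le1$), then invoke Lemma~\ref{Prop 3.2 and Cor 3.14}(c) in each direction. Your write-up is in fact more explicit than the paper's, which simply asserts that exactness of the four-term sequence yields $\Hom_A(T,L)=0$ and $\Ext^1_A(T,M)=0$ without isolating the two vanishings as you do; the only point where you are vaguer than the paper is the phrase ``a suitable choice of $f$'' in the converse, which you should make concrete (the paper is equally terse here, reading $\Ext^1_A(T,M)=0$ directly off the sequence for a single proper nonzero $L$).
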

\begin{proof}
We only prove (a); the other proofs can be obtained by analogue approaches, sometimes using $-\otimes_B T$ instead of $\Hom_A(T,-)$. 
For the necessity, suppose that $M\in \und{\T}$ and $f:M\rightarrow N$ is an epimorphism in $\mod A$. If $f$ is an isomorphism, the sequence is clearly exact. Else, since $M\in \und{\T}$, it follows from (\ref{Prop 3.2 and Cor 3.14})(c) that $\Ker f\in \F$. The statement then follows from the fact that $\Hom_A(T, -)\mid_{\F}=0$ and $\Ext^1_A(T,-)\mid_{\T}=0$.

Conversely, suppose that for each epimorphism $f:M\rightarrow N$ $\mod A$, the sequence $0\rightarrow \Hom_A(T, X)\rightarrow \Hom_A(T, N)\rightarrow \Ext_A^1(T, \Ker f)\rightarrow 0$ is exact in $\mod B$. Suppose that $L$ is a proper nonzero submodule of $M$.  We have an epimorphism $f:M\rightarrow M/L$ with $\Ker f=L$, and it follows form the exactness of the sequence that $\Hom_A(T, L)=0$ and $\Ext^1_A(T,M)=0$. Consequently, $M\in\T$ and $L\in \F$ by (\ref{Prop 3.2 and Cor 3.14})(c).
\end{proof}

\begin{proposition}\label{Prop 3.6}
Let $T$ be a tilting $A$-module. 
\begin{enumerate}
\item[(a)] $\Hom_A(T, \und{\T})=\ov{\Y}$.
\item[(b)] $\ov{\Y}\otimes_BT=\und{\T}$.
\item[(c)] $\Ext_A^1(T,\ov{\F})=\und{\X}$.
\item[(d)] $\Tor^B_1(\und{\X}, T)=\ov{\F}$. 
\end{enumerate}
\end{proposition}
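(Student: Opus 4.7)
The strategy is to reduce the four parts to two key equivalences, using that the Brenner-Butler functors are quasi-inverse: (i) for $M\in\T$, $M\in\und{\T}$ iff $\Hom_A(T,M)\in\ov{\Y}$; and (ii) for $M\in\F$, $M\in\ov{\F}$ iff $\Ext^1_A(T,M)\in\und{\X}$. Once (i) and (ii) are established, the equalities in (a)--(d) follow directly: (a) from the forward direction of (i) and (b) from the backward direction of (i) applied to $M=N\otimes_B T$ for $N\in\ov{\Y}$ (using $N\otimes_B T\in\T$ and the counit iso $\Hom_A(T,N\otimes_B T)\cong N$); similarly (c) and (d) follow from (ii) via $\Ext^1_A(T,-)$ and $\Tor^B_1(-,T)$.

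For (i), I would first reformulate both sides in terms of submodules and quotients lying within $\T$ and $\Y$. By Lemma \ref{Prop 3.2 and Cor 3.14}(c) together with a torsion-part trick (if $L\subsetneq M$ is a proper nonzero submodule with $M\in\T$ and $tL\neq 0$, then $tL\subseteq L\subsetneq M$ is itself a proper nonzero submodule lying in $\T$), we obtain: $M\in\und{\T}$ iff $M\in\T$ and no proper nonzero submodule of $M$ lies in $\T$. The dual reformulation for the torsion pair $(\X,\Y)$ in $\mod B$ yields: $N\in\ov{\Y}$ iff $N\in\Y$ and no proper nonzero quotient of $N$ lies in $\Y$. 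The crucial step is then to establish, for each $M\in\T$, a bijection between the proper nonzero submodules of $M$ lying in $\T$ and the proper nonzero quotients of $\Hom_A(T,M)$ lying in $\Y$. The forward map sends $L$ to $\Hom_A(T,M/L)$, well-defined by Remark \ref{Rem 3.4}(a) applied to $0\to L\to M\to M/L\to 0$ (whose three terms all lie in $\T$ since $\T$ is closed under quotients). The inverse sends a quotient $Q$ of $\Hom_A(T,M)$ with kernel $K$ (automatically in $\Y$, by closure under submodules) to $K\otimes_B T$, which embeds into $M$ as a submodule in $\T$ via Remark \ref{Rem 3.4}(b), the vanishing $\Tor^B_1(Q,T)=0$ (since $Q\in\Y$), and the counit iso $\Hom_A(T,M)\otimes_B T\cong M$. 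The two constructions are mutually inverse by the Brenner-Butler unit and counit isomorphisms, and proper/nonzero conditions transfer directly; combined with the reformulations, this yields (i).

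For (ii), an entirely parallel argument applies with $\Ext^1_A(T,-):\F\to\X$ and its quasi-inverse $\Tor^B_1(-,T)$. Lemma \ref{Prop 3.2 and Cor 3.14}(d) gives $M\in\ov{\F}$ iff $M\in\F$ and no proper nonzero quotient of $M$ lies in $\F$; the analog for $(\X,\Y)$ gives $N\in\und{\X}$ iff $N\in\X$ and no proper nonzero submodule of $N$ lies in $\X$. Given a surjection $M\twoheadrightarrow N$ in $\F$ with kernel $K$ (automatically in $\F$, since $\F$ is closed under submodules), Remark \ref{Rem 3.4}(c) yields $0\to\Ext^1_A(T,K)\to\Ext^1_A(T,M)\to\Ext^1_A(T,N)\to 0$ in $\X$, identifying $\Ext^1_A(T,K)$ as a submodule of $\Ext^1_A(T,M)$ in $\X$. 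Conversely, any submodule $K'\subseteq\Ext^1_A(T,M)$ lying in $\X$ has quotient automatically in $\X$ (by closure of $\X$ under quotients), so Remark \ref{Rem 3.4}(d) and the counit iso $\Tor^B_1(\Ext^1_A(T,M),T)\cong M$ allow the reverse construction. The main obstacle will be the bookkeeping to ensure that \lq\lq submodule in $\T$\rq\rq\ pairs with \lq\lq quotient in $\Y$\rq\rq, and \lq\lq quotient in $\F$\rq\rq\ with \lq\lq submodule in $\X$\rq\rq---a subtlety forced by the fact that the Brenner-Butler equivalences $\T\cong\Y$ and $\F\cong\X$ are equivalences of additive categories but not of the ambient abelian categories, so exactness in short exact sequences must be controlled throughout via Remark \ref{Rem 3.4}.
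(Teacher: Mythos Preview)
Your argument is correct and rests on the same core ingredients as the paper: Remark~\ref{Rem 3.4} to transport short exact sequences across the Brenner--Butler equivalences, together with the unit/counit isomorphisms. The organization differs, however. The paper proves only the inclusion $\Hom_A(T,\und{\T})\subseteq\ov{\Y}$ directly, by a short contradiction argument: if $Y=\Hom_A(T,X)$ had a proper nonzero quotient $Y/L\in\Y$, then applying $-\otimes_B T$ to the short exact sequence $0\to L\to Y\to Y/L\to 0$ in $\Y$ via Remark~\ref{Rem 3.4}(b) yields a proper nonzero submodule $L\otimes_B T$ of $X$ lying in $\T$, contradicting $X\in\und{\T}$. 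The reverse inclusion is then delegated to the analogous argument for (b). Your approach instead builds the full bijection between $\T$-submodules of $M$ and $\Y$-quotients of $\Hom_A(T,M)$, which is more work but packages (a) and (b) into the single equivalence (i); the paper's contradiction is precisely your backward map, and the analogous proof of (b) is your forward map. One minor point: your detour through Lemma~\ref{Prop 3.2 and Cor 3.14}(c) and the torsion-part trick is unnecessary, since the reformulation ``$M\in\und{\T}$ iff $M\in\T$ and no proper nonzero submodule of $M$ lies in $\T$'' is literally the definition of $\und{\T}$.
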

\begin{proof}
We only prove (a); the other proofs can be obtained by analogue arguments. Let $X\in\und{\T}$.  Set $Y=\Hom_A(T,-)$ and suppose that $Y\notin \ov{\Y}$. Then there exists a proper nonzero submodule $L$ of $Y$ such that $Y/L\in\Y$.  Since $\Y$ is closed under submodules, we get a short exact sequence $0\rightarrow L\rightarrow Y \rightarrow Y/L\rightarrow 0$ in $\Y$.  By (\ref{Rem 3.4})(b), the induced sequence $0\rightarrow L\otimes_B T\rightarrow Y\otimes_B T\rightarrow Y/L\otimes_B T\rightarrow 0$ is exact in $\mod B$. However, $Y\otimes_B T=\Hom_A(T, X)\otimes_B T\cong X$ by the Brenner-Butler theorem.  Since $X\in\und{\T}$, we get $L\otimes_B T=0$, and so $L=0$, a contradiction.  Therefore $Y=\Hom_A(T,X)\in\ov{\Y}$.
\end{proof}


We are now in position to demonstrate the main result of this section.  This result shows that the smallest families of modules that filter $\T$, $\F$, $\X$ and $\Y$ are totally compatible with the Brenner-Butler theorem.

\begin{theorem}\label{Prop 3.8}
Let $T$ be a tilting $A$-module.
\begin{enumerate}
\item[(a)] Let $M\in\T$.
\begin{enumerate}
\item[(i)] If $0\subset M_1 \subset \cdots \subset M_{n-1}\subset M
$
is a $\und{\T}$-filtration of $M$, then the sequence 
\[
0\subset \Hom_A(T,M_1) \subset \cdots \subset \Hom_A(T,M_{n-1})\subset \Hom_A(T,M)
\]
induced by $\Hom_A(T, -)$ is a $\ov{\Y}$-filtration of $\Hom_A(T,M)$.
\item[(ii)] If 
$
M\rightarrow M_{n-1} \rightarrow \cdots \rightarrow M_1 \rightarrow 0
$
is a $\und{\T}$-cofiltration of $M$, then the sequence
\[
\Hom_A(T,M)\rightarrow\Hom_A(T,M_{n-1})\rightarrow \cdots \rightarrow\Hom_A(T,M_1) \rightarrow 0
\]
induced by $\Hom_A(T, -)$
is a $\ov{\Y}$-cofiltration of $\Hom_A(T,M)$.
\end{enumerate}
\item[(b)] Let $M\in\Y$.
\begin{enumerate}
\item[(i)] If $
0\subset M_1 \subset \cdots \subset M_{n-1}\subset M
$
is a $\ov{\Y}$-filtration of $M$, then the sequence
\[
0\subset M_1\otimes_B T \subset \cdots \subset M_{n-1}\otimes_B T\subset M\otimes_B T
\]
induced by $-\otimes_B T$
is a $\und{\T}$-filtration of $M\otimes_B T$.
\item[(ii)] If 
$
M\rightarrow M_{n-1} \rightarrow \cdots \rightarrow M_1 \rightarrow 0
$
is a $\ov{\Y}$-cofiltration of $M$, then the sequence
\[
M\otimes_B T\rightarrow M_{n-1}\otimes_B T \rightarrow \cdots \rightarrow M_1\otimes_B T \rightarrow 0
\]
induced by $-\otimes_B T$
is a $\und{\T}$-cofiltration of $M\otimes_B T$.
\end{enumerate}
\item[(c)] Let $M\in\F$.
\begin{enumerate}
\item[(i)] If $
0\subset M_1 \subset \cdots \subset M_{n-1}\subset M
$
is a $\ov{\F}$-filtration of $M$, then the sequence
\[
0\subset \Ext^1(T,M_1) \subset \cdots \subset \Ext^1(T,M_{n-1})\subset \Ext^1(T,M)
\]
induced by $\Ext^1(T,-)$
is a $\und{\X}$-filtration of $\Ext^1(T,M)$.
\item[(ii)] If 
$
M\rightarrow M_{n-1}\rightarrow \cdots\rightarrow M_1 \rightarrow 0
$
is a $\ov{\F}$-cofiltration of $M$, then the sequence
\[
\Ext^1(T,M) \rightarrow \Ext^1(T,M_{n-1}) \rightarrow\cdots \rightarrow\Ext^1(T,M_1)\rightarrow 0
\]
induced by $\Ext^1(T,-)$
is a $\und{\X}$-cofiltration of $\Ext^1(T,M)$.
\end{enumerate}
\item[(d)] Let $M\in\X$.
\begin{enumerate}
\item[(i)] If $
0\subset M_1 \subset \cdots \subset M_{n-1}\subset M
$
is a $\und{\X}$-filtration of $M$, then the sequence
\[
0\subset \Tor^B_1(M_1, T) \subset \cdots \subset \Tor^B_1(M_{n-1}, T) \subset \Tor^B_1(M, T)
\]
induced by $\Tor^B_1(-,T)$
is a $\ov{\F}$-filtration of $\Tor^B_1(M, T)$.
\item[(ii)] If 
$
M\rightarrow M_{n-1} \rightarrow \cdots \rightarrow M_1 \rightarrow 0
$
is a $\und{\X}$-cofiltration of $M$, then the sequence
\[
\Tor^B_1(M, T)\rightarrow\Tor^B_1(M_{n-1}, T) \rightarrow\cdots\rightarrow \Tor^B_1(M_1, T) \rightarrow 0
\]
induced by $\Tor^B_1(-,T)$
is a $\ov{\F}$-cofiltration of $\Tor^B_1(M, T)$.
\end{enumerate}
\end{enumerate}
\end{theorem}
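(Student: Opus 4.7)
The plan is to treat the eight statements in pairs, proving the four filtration cases (i) first and then deducing the four cofiltration cases (ii) from them. The common toolkit is: Remark \ref{Rem 3.4}, which says each of $\Hom_A(T,-)$, $-\otimes_B T$, $\Ext^1_A(T,-)$ and $\Tor^B_1(-,T)$ is fully exact on short exact sequences whose three terms all lie in the appropriate torsion or torsion-free class; Proposition \ref{Prop 3.6}, which says each functor sends the smallest filtering class of the source to the smallest filtering class of the target; and the Brenner-Butler quasi-equivalences between $\T$ and $\Y$ and between $\F$ and $\X$, which keep nonzero composition factors nonzero, so each induced inclusion stays strict and each induced epimorphism stays proper.

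For the filtration cases, (a)(i) serves as the template. Starting from the $\und{\T}$-filtration $0\subset M_1\subset\cdots\subset M_{n-1}\subset M$ with $M_i/M_{i-1}\in\und{\T}\subseteq\T$, induction together with extension-closedness of $\T$ (Proposition \ref{Prop Torsion}) places each $M_i$ in $\T$. Remark \ref{Rem 3.4}(a) then converts each short exact sequence $0\to M_{i-1}\to M_i\to M_i/M_{i-1}\to 0$ into
\[
0\to\Hom_A(T,M_{i-1})\to\Hom_A(T,M_i)\to\Hom_A(T,M_i/M_{i-1})\to 0
\]
in $\mod B$; Proposition \ref{Prop 3.6}(a) puts the rightmost term in $\ov{\Y}$, and Brenner-Butler applied to the nonzero $M_i/M_{i-1}$ keeps this quotient nonzero, so the inclusion $\Hom_A(T,M_{i-1})\subset \Hom_A(T,M_i)$ is strict. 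Cases (b)(i), (c)(i) and (d)(i) follow the identical scheme with the matching functor and the matching parts of Remark \ref{Rem 3.4} and Proposition \ref{Prop 3.6}; in (b)(i) and (c)(i) the $M_i$ lie in their torsion-free class automatically by submodule-closedness, while in (d)(i) extension-closedness of $\X$ plays the role it did for $\T$ in (a)(i).

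For the cofiltration cases, the cleanest route is through Proposition \ref{Prop 1.7}: convert the given cofiltration into a filtration, apply the filtration case already proved, and then convert the resulting filtration in the target back into a cofiltration, checking that this matches what the functor yields when applied directly to the original cofiltration. For (a)(ii) and (d)(ii) this is immediate, since the intermediate terms are quotients of $M\in\T$ or $M\in\X$ and both $\T,\X$ are closed under quotients (Proposition \ref{Prop Torsion}), so Remark \ref{Rem 3.4} applies to every consecutive short exact sequence on the nose. For (b)(ii) and (c)(ii) the ambient classes $\Y,\F$ are not quotient-closed, and this is the only real bookkeeping subtlety I anticipate: one must check that each intermediate term of the cofiltration still lies in $\Y$ or $\F$. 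Writing such an intermediate as $M/K$ for the submodule $K\subseteq M$ produced by Proposition \ref{Prop 1.7}, the Tor long exact sequence of $0\to K\to M\to M/K\to 0$ gives
\[
\Tor^B_1(M/K,T)=\Ker(K\otimes_B T\to M\otimes_B T),
\]
which vanishes because (b)(i) has just provided the injectivity of that map; hence $M/K\in\Y$. In (c)(ii) the analogous $\Ext$ long exact sequence, combined with $\pd T\leq 1$ and $\Hom_A(T,M)=0$, forces $M/K\in\F$. With each intermediate term thus placed in the correct class, Remark \ref{Rem 3.4} applies, Proposition \ref{Prop 3.6} identifies the induced kernels as elements of $\und{\T}$ or $\und{\X}$, and Brenner-Butler again provides the required properness, completing the argument.
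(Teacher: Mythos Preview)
Your proof is correct and, for the filtration cases (i), follows exactly the paper's approach: the paper only writes out (a)(i), using Remark~\ref{Rem 3.4}(a) and Proposition~\ref{Prop 3.6}(a), and declares the rest analogous. Your added remarks on why each $M_i$ lies in the relevant class and why the inclusions remain strict are welcome details the paper leaves implicit.

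For the cofiltration cases (ii), your detour through Proposition~\ref{Prop 1.7} and the long exact sequence in $\Tor$/$\Ext$ is valid but heavier than necessary. The ``subtlety'' you flag in (b)(ii) and (c)(ii) dissolves once you note that $\Y$ and $\F$, as torsion-free classes, are closed under extensions: in a cofiltration $M=M_n\to M_{n-1}\to\cdots\to M_1\to 0$ with kernels in $\ov{\Y}$ (resp.\ $\ov{\F}$), one has $M_1=\Ker f_1\in\ov{\Y}\subseteq\Y$, and then the short exact sequence $0\to\Ker f_i\to M_i\to M_{i-1}\to 0$ places each $M_i$ in $\Y$ by upward induction. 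With all $M_i$ in the correct class, Remark~\ref{Rem 3.4} and Proposition~\ref{Prop 3.6} apply directly, exactly as in the filtration case. This is presumably what the paper's ``analogue arguments'' intends, and it avoids invoking (b)(i) or (c)(i) as input.
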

\begin{proof}
We only prove (a)(i); the other proofs can be obtained by analogue arguments.
Suppose that $0=M_0\subset M_1 \subset \cdots \subset M_{n-1}\subset M_n=M
$
is a $\und{\T}$-filtration of $M$.  It then follows from (\ref{Rem 3.4})(a) that $0\rightarrow \Hom_A(T,M_{i-1})\rightarrow \Hom_A(T,M_i)\rightarrow \Hom_A(T,M_i/M_{i-1})\rightarrow 0$ is exact for all $i=1, 2, \dots, n$.  Moreover, $\Hom_A(T,M_i/M_{i-1})\in \ov{\Y}$ for all $i=1, 2, \dots, n$ by (\ref{Prop 3.6})(a).  This shows the claim.

\end{proof}

\begin{remark}\label{Cor 3.9}
Let $T$ be a tilting $A$-module, and let $M\in \T$. By applying consecutively the constructions explained in (\ref{Prop 1.7}) and (\ref{Prop 3.8}), one can construct $\ov{\Y}$-cofiltrations of $\Hom_A(T,M)$ given a $\und{\T}$-filtration of $M$. Actually, depending on whether the construction (\ref{Prop 1.7}) or (\ref{Prop 3.8}) is applied first, one might expect to obtain different $\ov{\Y}$-cofiltrations of $\Hom_A(T,M)$. Similarly, one can possibly construct two different $\ov{\Y}$-filtrations of $\Hom_A(T,M)$ given a $\und{\T}$-cofiltration of $M$. 

However, it is not difficult to verify, using (\ref{Rem 3.4}), the First Isomorphism Theorem and the left exactness of $\Hom_A(T, -)$, that in both cases the two constructions commute up to isomorphisms.  More precisely, the $B$-modules appearing in the $\ov{\Y}$-cofiltrations (or $\ov{\Y}$-filtrations) of $\Hom_A(T,M)$ are the same, up to isomorphisms. 

Similarly, it is possible to show that the constructions (\ref{Prop 1.7}) and (\ref{Prop 3.8}) commute, up to isomorphisms, for all quasi-inverse equivalences in the Brenner-Butler theorem. The easy but tedious verification is left to the reader.
\end{remark}

\subsection{Modules with (co)standard filtrations}

Let $A$ be an artinian algebra, and let $\theta$ be a subset of $\mod A$ that is closed under isomorphisms.  It is easy to verify that $\F(\theta)$ is closed under extensions.  Moreover, $\ov{\F(\theta)}\cup \und{\F(\theta)}\subseteq \theta$ when $\theta$ consists of nonzero modules.  Indeed, suppose that $M\in\F(\theta)$ and that $0\subset M_1\subset M_2 \subset\cdots\subset M_n=M$ is a $\theta$-filtration of $M$. In particular $M/M_{n-1}\in\theta$.  If $M\in\und{\F(\theta)}$, then $M_{n-1}=0$, while if $M\in\ov{\F(\theta)}$, then $M/M_{n-1}=M$, that is $M_{n-1}=0$.  So $M\in\theta$.

However, in general, we do not have $\theta\subseteq \ov{\F(\theta)}\cup \und{\F(\theta)}$. For instance, if $A$ is a non-semisimple algebra of finite global dimension, and we take $\theta=\mod A$, then $\F(\theta)=\mod A$ but $\ov{\F(\theta)}=\und{\F(\theta)}$ consists of all simple $A$-modules by (\ref{Lem 3.11}) and (\ref{Rem 1.1(a)(b)}), while $\mod A$ has non-simple modules, so $\ov{\F(\theta)}\cup\und{\F(\theta)}\neq \theta$.

In this section, given a subset $\theta$ of $\mod A$ we exhibit a sufficient condition to have $\ov{\F(\theta)}\cup\und{\F(\theta)}=\theta$ when $\F(\theta)$ is closed under kernels of epimorphisms or closed under cokernels of monomorphisms.  
We then apply this result to the modules having standard or costandard filtrations. Recall that these modules play an important role in representation theory of algebras, in particular for the construction of standardly stratified and quasi-hereditary algebras.  These latter algebras were used by Iyama to show that the representation dimension of any artin algebra is always finite, see \cite{I03}.

Before stating the first result, recall from (\ref{Rem 1.1(a)(b)}) that $\ov{\F(\theta)}\cup\und{\F(\theta)}=\ov{\F(\theta)}$ when $\F(\theta)$ is closed under kernels of epimorphisms, and $\ov{\F(\theta)}\cup\und{\F(\theta)}=\und{\F(\theta)}$ when $\F(\theta)$ is closed under cokernels of monomorphisms.

\begin{proposition}\label{Prop sufficient}
Let $A$ be an artinian algebra, and let $\theta$ be a subset of $\mod A$ that does not contain the zero module and is closed under isomorphisms. 
\begin{enumerate}
\item[(a)] Suppose that $\F(\theta)$ is closed under kernels of epimorphisms.  If every epimorphism $\theta_1\rightarrow \theta_2$, with $\theta_1, \theta_2\in\theta$, is an isomorphism, then $\ov{\F(\theta)}=\theta$.
\item[(b)] Suppose that $\F(\theta)$ is closed under cokernels of monomorphisms.  If every monomorphism $\theta_1\rightarrow \theta_2$, with $\theta_1, \theta_2\in\theta$, is an isomorphism, then $\und{\F(\theta)}=\theta$.
\end{enumerate}
\end{proposition}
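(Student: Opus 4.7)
The plan is to prove (a) in detail; part (b) will follow by a formally dual argument. By the discussion immediately preceding the proposition, the inclusion $\ov{\F(\theta)}\subseteq\theta$ already holds, so the task is to establish the reverse containment $\theta\subseteq\ov{\F(\theta)}$. Every $M\in\theta$ belongs to $\F(\theta)$ via the one-step filtration $0\subset M$, so what I would actually verify is the defining property of $\ov{\F(\theta)}$: for every proper nonzero submodule $L$ of $M$, the quotient $M/L$ fails to lie in $\F(\theta)$.

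I would proceed by contradiction. Assume $M\in\theta$ admits a proper nonzero submodule $L$ with $M/L\in\F(\theta)$, and fix a $\theta$-filtration $0=N_0\subset N_1\subset\cdots\subset N_n=M/L$. Since $L$ is proper we have $M/L\neq 0$ and hence $n\geq 1$. The decisive construction is the composite
\[
M\longrightarrow M/L\longrightarrow (M/L)/N_{n-1}\;\cong\;N_n/N_{n-1},
\]
which is an epimorphism from $M\in\theta$ onto $N_n/N_{n-1}\in\theta$. By the hypothesis in (a) it must then be an isomorphism, hence have zero kernel. But the kernel of the composite is the preimage of $N_{n-1}$ under the canonical surjection $M\to M/L$, and this preimage contains $L$ and is therefore nonzero; a contradiction.

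For (b) I would argue dually. If $M\in\theta$ had a proper nonzero submodule $L\in\F(\theta)$, I would pick a $\theta$-filtration $0=L_0\subset L_1\subset\cdots\subset L_m=L$ and form the composite monomorphism $L_1\hookrightarrow L\hookrightarrow M$. Both endpoints lie in $\theta$, so by hypothesis this composite is an isomorphism, which would force $L_1=M$; this is incompatible with $L_1\subseteq L\subsetneq M$. The main (if modest) obstacle is choosing the correct step of the filtration in each case: one must use the top quotient $N_n/N_{n-1}$ in (a) and the bottom submodule $L_1$ in (b), so that the map produced is a genuine epimorphism (respectively monomorphism) between modules of $\theta$; any interior step of the filtration would fail to provide either the source or the target needed. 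I note in passing that the closure hypotheses on $\F(\theta)$ are not directly invoked in the arguments themselves; their role, via Lemma~\ref{Rem 1.1(a)(b)}, is to guarantee that the union $\ov{\F(\theta)}\cup\und{\F(\theta)}$ considered in the preceding discussion collapses to $\ov{\F(\theta)}$ in (a) and to $\und{\F(\theta)}$ in (b).
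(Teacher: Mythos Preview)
Your proof is correct. Both your argument and the paper's hinge on the same idea: produce an epimorphism between two members of $\theta$ by passing to the top quotient of a $\theta$-filtration, then invoke the hypothesis to force it to be an isomorphism. The packaging differs slightly. The paper first applies Lemma~\ref{Lem 1.3}(b) to a given $\theta_1\in\theta$ to extract a quotient $X\in\ov{\F(\theta)}$, then maps $X$ onto some $\theta_2\in\theta$, and argues that both maps in the composite $\theta_1\to X\to\theta_2$ are isomorphisms (the second because $X\in\ov{\F(\theta)}$ forbids proper quotients in $\F(\theta)$); this yields $\theta_1\cong X\in\ov{\F(\theta)}$. You instead verify the defining condition of $\ov{\F(\theta)}$ directly: assuming a bad quotient $M/L\in\F(\theta)$, you pass straight to its top $\theta$-factor and obtain a non-injective epimorphism $M\to N_n/N_{n-1}$ inside $\theta$. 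Your route is a bit shorter, avoiding the detour through Lemma~\ref{Lem 1.3}, and your closing remark is on point: neither the inclusion $\ov{\F(\theta)}\subseteq\theta$ (established in the discussion preceding the proposition) nor your argument for the reverse inclusion actually uses the closure of $\F(\theta)$ under kernels of epimorphisms, whereas the paper invokes it through Theorem~\ref{Thm 1.5} to re-derive the first inclusion.
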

\begin{proof}
We only prove (a); the proof of (b) is dual. First, as $\F(\theta)$ is closed under extensions and kernels of epimorphisms, it follows from (\ref{Thm 1.5}) that $\ov{\F(\theta)}$ is the smallest class of modules that filters $\F(\theta)$. Therefore $\ov{\F(\theta)}\subset \theta$.  

To show the reverse inclusion, suppose that $\theta_1\in\theta$.  As $\theta\subseteq\F(\theta)$, there exists an epimorphism $f:\theta_1\rightarrow X$, with $X\in \ov{\F(\theta)}$, see (\ref{Lem 1.3}). But $X\in\F(\theta)$, so there exists an epimorphism $g:X\rightarrow \theta_2$, with $\theta_2\in \theta$.  By hypothesis on $\theta$, the composition $gf:\theta_1\rightarrow X\rightarrow \theta_2$ is an isomorphism.  Clearly, $g\neq 0$.  It then follows from the facts that $X\in \ov{\F(\theta)}$ and $\theta_2\in\F(\theta)$ that $g$ is an isomorphism, showing that $X\in\theta$.  
\end{proof}

We finish with a discussion on the modules with (proper) standard filtrations or (proper) costandard filtrations that appear in the context of quasi-hereditary algebras and their generalizations.

\begin{example}
Let $A$ be an artin algebra, and $P_1, P_2, \dots, P_n$ be a fixed ordering of the indecomposable projective $A$-modules, with corresponding simple tops $S_1, S_2, \dots, S_n$.  Let $\Delta_i$ be the largest factor of $P_i$ with no composition factor $S_j$, with $j>i$.  Denote by ${\Delta_i}^*$ the largest factor of $\Delta_i$ where $S_i$ occurs only once as a composition factor.  Finally, let $\Delta=\{\Delta_1, \Delta_2, \dots, \Delta_n\}$ and ${\Delta}^*=\{{\Delta_1}^*, {\Delta_2}^*, \dots, {\Delta_n}^*\}$. The objects in $\F(\Delta)$ are those with \textbf{standard filtrations}, and the objects in $\F(\Delta^*)$ are those with \textbf{proper standard filtrations}.

Dually, let $I_1, I_2, \dots, I_n$ be the indecomposable injective modules, with soc$I_j\cong S_j$.  Let $\nabla_i$ be the largest submodule of $I_i$ with no composition factor $S_j$ with $j>i$, and ${\nabla_i}^*$ be the largest submodule of $\nabla_i$ with $S_i$ occurring only once as composition factor.  Finally, let $\nabla=\{\nabla_1, \nabla_2, \dots, \nabla_n\}$ and ${\nabla}^*= \{{\nabla_1}^*, {\nabla_2}^*, \dots, {\nabla_n}^*\}$. The objects in $\F(\nabla)$ are those with \textbf{costandard filtrations}, and the objects in $\F(\nabla^*)$ are those with \textbf{proper costandard filtrations}.

It is known that $\F(\Delta)$ and $\F({\Delta}^*)$ are closed under direct summands and kernels of epimorphisms in $\mod A$ and that $\F(\nabla)$ and $\F({\nabla}^*)$ are closed under direct summands and cokernels of monomorphisms in $\mod A$, see \cite{ADL98, DR92, R91}. Moreover, if $\Delta_i$ and $\Delta_j$ are two modules in $\Delta$, then $\top\Delta_i=S_i$ and $\top\Delta_j=S_j$ by construction. Therefore, every epimorphism from $\Delta_i$ to $\Delta_j$ is an isomorphism. The same argument holds for $\Delta^*$. Dually, every module in $\nabla$ or $\nabla^*$ has a simple socle, so any monomorphism between two such modules in $\nabla$ or $\nabla^*$ is an isomorphism. 

Consequently, it follows from (\ref{Prop sufficient}) that $\ov{\F(\Delta)}=\Delta$ and $\ov{\F({\Delta}^*)}={\Delta}^*$ and $\und{\F(\nabla)}=\nabla$ and $\und{\F({\nabla}^*)}={\nabla}^*$.
\end{example}



%

\end{document}